\documentclass{amsart}

\usepackage[]{fontenc}
\usepackage[latin1]{inputenc}
\usepackage{geometry}
\usepackage{fancyhdr}
\usepackage{setspace}
\usepackage{pstricks}
\usepackage{pst-node}
\usepackage{pst-plot}
\usepackage[all]{xy}
\usepackage{amssymb}

\usepackage{amsfonts}
\usepackage{eufrak}
\usepackage{amsmath}
\usepackage{amsthm}
\usepackage{a4wide}
\usepackage{textcomp}
\usepackage{epsfig}

\parindent      0mm
\parskip        2mm

\newtheorem{thm}{Theorem}[section]
\newtheorem{defi}[thm]{Definition}

\newtheorem{conj}[thm]{Conjecture}
\newtheorem{prop}[thm]{Proposition}
\newtheorem{rmk}[thm]{Remark}
\newtheorem{lemma}[thm]{Lemma}

\begin{document}

\title[A Morse estimate for translated points]
{A Morse estimate for translated points of contactomorphisms of spheres and projective spaces}

\author{Sheila Sandon}

\begin{abstract}
\noindent
A point $q$ in a contact manifold $(M,\xi)$ is called a \textit{translated point} for a contactomorphism $\phi$ with respect to some fixed contact form if $\phi(q)$ and $q$ belong to the same Reeb orbit and the contact form is preserved at $q$. In this article we discuss a version of the Arnold conjecture for translated points of contactomorphisms and, using generating functions techniques, we prove it in the case of spheres (under a genericity assumption) and projective spaces.
\end{abstract}

\maketitle

\section{Introduction}\label{intro}

Let $(M,\xi)$ be a cooriented contact manifold, with a fixed contact form $\alpha$. Given a contactomorphism $\phi$ of $M$ we denote by $g: M \rightarrow \mathbb{R}$ the function satisfying $\phi^{\ast}\alpha=e^g\alpha$. A point $q$ of $M$ is called a \textbf{translated point} of $\phi$ (with respect to the contact form $\alpha$) if $q$ and $\phi(q)$ belong to the same Reeb orbit and if moreover the contact form $\alpha$ is preserved at $q$, i.e. $g(q)=0$. The following theorem is the main result of this article.

\begin{thm}\label{main}
Consider the unit sphere $S^{2n-1}$ in $\mathbb{R}^{2n}$ with its standard contact form $\alpha=xdy-ydx$, and the projective space $\mathbb{R}P^{2n-1}$ seen as the quotient of $S^{2n-1}$ by the antipodal action of $\mathbb{Z}_2$, with the induced contact form.
\begin{enumerate}
\renewcommand{\labelenumi}{(\roman{enumi})}
\item Every generic contactomorphism of $S^{2n-1}$ which is contact isotopic to the identity has at least $2$ translated points.
\item Every contactomorphism of $\mathbb{R}P^{2n-1}$ which is contact isotopic to the identity has at least $2n$ translated points.
\end{enumerate}
\end{thm}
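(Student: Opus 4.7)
The strategy is to apply generating function techniques in the contact setting, in the spirit of Chaperon, Viterbo and Th\'eret in the symplectic case and of the author's earlier work for contactomorphisms of $\mathbb{R}^{2n+1}$ and prequantization bundles. The plan has three steps: construct a generating function $F$ associated with a contactomorphism $\phi$ contact isotopic to the identity, identify the critical points of $F$ with translated points of $\phi$, and then estimate the number of critical points via the topology of $S^{2n-1}$ or $\mathbb{R}P^{2n-1}$.

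For the construction, I would subdivide a contact isotopy $\{\phi_t\}_{t\in[0,1]}$ from the identity to $\phi$ into sufficiently many small steps $\phi_{t_{k+1}}\circ\phi_{t_k}^{-1}$, each $C^1$-close to the identity and hence admitting a primitive generating function on an appropriate contact product of $S^{2n-1}$, and concatenate these in the standard way to produce a function $F\colon B\times\mathbb{R}^N\to\mathbb{R}$ (with $B=S^{2n-1}$ or $\mathbb{R}P^{2n-1}$) which is quadratic at infinity in the fiber variables. For the projective space case, one arranges the isotopy to be $\mathbb{Z}_2$-equivariant so that the construction descends from $S^{2n-1}$. The decisive verification is that a critical point of $F$, namely a fiber-critical point $(q,\eta)$ at which $\partial_q F=0$, corresponds precisely to a translated point of $\phi$: the fiber-critical condition places $(q,\eta)$ on the Legendrian graph of $\phi$ in the contact product, while the vanishing of $\partial_q F$ encodes simultaneously the conditions that $\phi(q)$ and $q$ lie on a common Reeb orbit and that the conformal factor $g$ vanishes at $q$.

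With such an $F$ in hand, the count of critical points follows from the standard topological estimates for functions quadratic at infinity. For $B=\mathbb{R}P^{2n-1}$, the Lusternik-Schnirelmann inequality together with the cup-length computation $\mathrm{cl}_{\mathbb{Z}_2}(\mathbb{R}P^{2n-1})=2n-1$ yields at least $2n$ critical points, proving (ii); for $B=S^{2n-1}$ and generic $\phi$, the function $F$ is Morse and the Morse inequalities give at least $\sum_i b_i(S^{2n-1})=2$ critical points, proving (i). The main obstacle I anticipate is the second step: because contactomorphisms do not preserve a symplectic form and the Reeb direction must be tracked separately, lifting to the symplectization or to the universal cover of the contactomorphism group introduces an ambiguity along the Reeb axis that must be handled carefully throughout the discretization and composition. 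One must also verify that the concatenated $F$ is globally well-defined on $B\times\mathbb{R}^N$ (and in the projective case truly descends via the $\mathbb{Z}_2$-action) and is genuinely quadratic at infinity, so that the topological critical-point estimates apply.
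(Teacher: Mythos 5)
Your proposal founders on two points, and they are precisely the two places where the paper has to do something genuinely different from the ``standard'' generating-function scheme. First, the construction step: there is no standard way to concatenate the local generating functions of the $\mathcal{C}^1$-small pieces $\phi_{t_{k+1}}\circ\phi_{t_k}^{-1}$ into a function on $B\times\mathbb{R}^N$ with $B=S^{2n-1}$ or $\mathbb{R}P^{2n-1}$. The composition formula for generating functions (Th\'eret's, or Chekanov's variant) uses the linear structure of $\mathbb{R}^{2n}$ in an essential way, and the contact product $M\times M\times\mathbb{R}$ for $M=S^{2n-1}$ is \emph{not} globally contactomorphic to $J^1M$ (unlike the case $M=\mathbb{R}^{2n+1}$ treated in the author's earlier work, which is what makes the ``base $=$ contact manifold'' approach work there): only a Weinstein neighborhood of the diagonal is, so for $\phi$ not $\mathcal{C}^1$-small its graph does not live in any space to which Chekanov's theorem or a fiberwise concatenation applies. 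The paper circumvents this by lifting $\phi$ to an $\mathbb{R}_+$-equivariant Hamiltonian symplectomorphism $\Phi$ of $\mathbb{R}^{2n}$ and composing there; the resulting $F:\mathbb{R}^{2n}\times\mathbb{R}^{2N}\to\mathbb{R}$ is homogeneous of degree $2$, not quadratic at infinity, and in the projective case it descends to a function on a large projective space $\mathbb{R}P^{2n+2M-1}$, not on $\mathbb{R}P^{2n-1}$ itself.

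Second, and more fatally, your claimed correspondence between critical points of a single generating function and translated points is exactly the ``tempting belief'' that the paper refutes in a footnote: such a bijection cannot exist, because generating functions can be stabilized, and a bijection would then force every contactomorphism of $\mathbb{R}P^{2n-1}$ to have infinitely many translated points, contradicting examples with exactly $2n$. What critical points of $F$ actually detect are \emph{discriminant} points, i.e.\ translated points with zero translation ($\phi(q)=q$ and $g(q)=0$); the translation amount is not encoded in a critical value. Consequently your Lusternik--Schnirelmann/Morse count over $B$, even if the construction existed, would count the wrong objects. To catch all translated points the paper must run a one-parameter argument: it considers $a_t\circ\phi$ for $t\in[0,1]$, where $a_t$ is the negative Reeb flow, takes the family $F_t=A_t\sharp F$, and counts the values of $t$ at which $f_t^{\phantom{t}-1}(0)$ is singular by tracking the topology of the sublevel sets $\{f_t\leq 0\}$. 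The quantitative engine is the Maslov-index computation $\mathrm{ind}(A_1)-\mathrm{ind}(A_0)=2n$ (Proposition \ref{mas}), converted into a count of crossings by elementary Morse theory on the sphere side and by the Fadell--Rabinowitz cohomological index (with its continuity and subadditivity) on the projective side. None of these ingredients---the lift to $\mathbb{R}^{2n}$, homogeneity, the family $a_t\circ\phi$, the index jump $2n$---appears in your outline, and the obstacle you flag (the Reeb ambiguity) is not a bookkeeping issue that careful discretization can fix: it is the structural reason a single-function argument cannot work.
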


Note that $2$ and $2n$ are the minimal number of critical points of a function defined respectively on $S^{2n-1}$ and $\mathbb{R}P^{2n-1}$. More generally we can thus wonder whether the following is true.

\begin{conj}\label{C}
Let $\phi$ be a contactomorphism of a contact manifold $\big(M,\xi=\text{ker}(\alpha)\big)$. Assume that $M$ is compact and that $\phi$ is contact isotopic to the identity. Then the number of translated points of $\phi$ is at least equal to the minimal number of critical points of a function on $M$.
\end{conj}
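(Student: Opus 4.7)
The plan is to attack Conjecture \ref{C} by adapting to the contact setting the classical variational approach to the Arnold conjecture for Hamiltonian symplectomorphisms, combining a Rabinowitz--Floer-type action functional with Lusternik--Schnirelmann minimax methods. The guiding principle is that translated points of $\phi$ should arise as critical points of a naturally associated functional whose underlying topology is controlled by that of $M$ itself, thereby transferring topological lower bounds on critical points of functions on $M$ into lower bounds on translated points.

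Concretely, given a contact isotopy $\{\phi_t\}_{t\in[0,1]}$ from the identity to $\phi$, generated by a contact Hamiltonian $H_t:M\to\mathbb{R}$, I would introduce an action functional $\mathcal{A}$ on a space of pairs $(\gamma,\eta)$, where $\gamma:[0,1]\to M$ is a path and $\eta\in\mathbb{R}$ is a Lagrange-multiplier variable playing the role of Reeb time, schematically of the form
$$\mathcal{A}(\gamma,\eta)=-\int_0^1\gamma^*\alpha+\int_0^1\eta\,H_t(\gamma(t))\,dt.$$
The functional should be engineered so that critical points $(\gamma,\eta)$ are in bijection with translated points $q=\gamma(0)$ of $\phi$, with the Reeb-time parameter recording $\phi(q)=\varphi^{\eta}_{R_\alpha}(q)$ and with the vanishing of the contact-factor condition $g(q)=0$ built into the Euler--Lagrange equations. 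When $\phi=\mathrm{id}$ and $H_t\equiv 0$, the critical set is the space of constant loops, naturally identified with $M$.

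With this set-up in place, the next step is to define, for each nonzero class $\sigma\in H^*(M)$, a spectral invariant $c(\sigma,\mathcal{A})$ by the usual minimax recipe, and to prove via a cup-length argument that distinct classes whose cup product is nonzero yield distinct critical values. This would give at least $\mathrm{cuplength}(M)+1$ translated points. Upgrading from cup-length to the full Lusternik--Schnirelmann category estimate, combined with the classical theorem that any smooth function on $M$ has at least $\mathrm{cat}(M)$ critical points, would recover Conjecture \ref{C} in its stated form.

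The main obstacle is analytic. On a general closed contact manifold, without access to a symplectic filling, a prequantization structure, or a generating family description, establishing a Palais--Smale condition for this strongly indefinite functional and constructing compact moduli spaces of gradient trajectories requires delicate pseudoholomorphic-curve techniques in the symplectization $\mathbb{R}\times M$, and is highly sensitive to the dynamics of the Reeb flow (closed Reeb orbits produce noncompact families of critical points, and the action spectrum must be controlled). A secondary but equally serious difficulty is that Floer-theoretic methods naturally produce only a sum-of-Betti-numbers lower bound, whereas Conjecture \ref{C} demands the sharper minimum-number-of-critical-points estimate; bridging this gap is the exact contact analogue of passing from the weak to the strong Arnold conjecture, and would plausibly require either a Chekanov-type perturbation argument promoting nondegenerate translated points into Morse critical points of an auxiliary function, or a genuinely infinite-dimensional LS-category framework adapted to the Floer setting.
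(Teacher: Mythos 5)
You have not proved Conjecture \ref{C}, and neither does the paper: the statement you were given is stated there \emph{as a conjecture}, and the paper establishes it only in restricted regimes --- for $\mathcal{C}^1$-small contactomorphisms (where Weinstein's neighborhood theorem for the Legendrian diagonal in $M\times M\times\mathbb{R}$ identifies $\mathrm{gr}_{\phi}$ with $j^1f$, so translated points dominate critical points of $f$), for time-1 maps of $\mathcal{C}^0$-small contact isotopies (via Chekanov's theorem \cite{C} on generating functions for Legendrian deformations of the zero section of $J^1\Delta$), and, as Theorem \ref{main}, for $S^{2n-1}$ (generically) and $\mathbb{R}P^{2n-1}$, by a finite-dimensional variational argument with homogeneous generating functions and the Fadell--Rabinowitz index \cite{FR}. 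Your text is, by its own admission, a research program rather than a proof: every analytically hard step (Palais--Smale, compactness of gradient trajectories, control of the Reeb dynamics) is deferred, so there is no argument to check at the decisive points.

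Beyond the overall incompleteness, three concrete steps would fail as written. First, the bijection between critical points of $\mathcal{A}$ and translated points is asserted, not established: the Euler--Lagrange equations of $-\int_0^1\gamma^{\ast}\alpha+\eta\int_0^1 H_t(\gamma(t))\,dt$ do not encode the condition $g(q)=0$, and in the Albers--Frauenfelder framework \cite{AF2} the Lagrange multiplier pairs with a \emph{defining Hamiltonian of the coisotropic inside an ambient exact symplectic manifold}, with the analysis carried out there rather than on $M$; on a general closed $(M,\alpha)$ with no filling, the requisite Floer theory does not exist, and the known leafwise-intersection results need strong extra hypotheses (Hofer norm below the displacement energy in Theorem \ref{AF}, $\mathbb{Z}_2$-equivariance in Theorem \ref{EH}) --- the paper itself notes that the lift of an arbitrary contactomorphism has large Hofer norm, so these smallness hypotheses cannot simply be waived. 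Second, closed Reeb orbits in the relevant action window are not a ``secondary'' nuisance: they are exactly what obstructs existence of even one translated point in general, which is why the paper's sharp results exploit the very special Reeb flow (Hopf fibration) of $S^{2n-1}$ and $\mathbb{R}P^{2n-1}$. Third, your concluding reduction is logically inverted: since $\mathrm{cat}(M)$ is a \emph{lower} bound for the number of critical points of any function on $M$, proving that translated points number at least $\mathrm{cat}(M)$ (or $\mathrm{cuplength}(M)+1$) yields a bound that may be strictly weaker than the minimal number of critical points demanded by Conjecture \ref{C}; closing that gap is precisely the strong-versus-weak Arnold problem, which remains open even symplectically. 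Where the paper does obtain the sharp counts $2$ and $2n$, it does so by an entirely different, finite-dimensional route --- Reeb chords between $\mathrm{gr}_{\phi}$ and $\Delta$, generating functions homogeneous of degree $2$ in the sense of Th\'eret and Givental, monotonicity (Lemma \ref{monotonicity}), the index jump of Proposition \ref{mas}, and the cohomological index --- which sidesteps all the infinite-dimensional analysis your plan would require.
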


This problem, that was already introduced in \cite{mio4}, can be considered as a contact version of the Arnold conjecture for fixed points of Hamiltonian symplectomorphisms. Recall that the Arnold conjecture says that the number of fixed points of a Hamiltonian symplectomorphisms $\varphi$ of a compact symplectic manifold $(W,\omega)$ is at least equal to the minimal number of critical points of a function on $W$. This conjecture was posed in the 60's and since then has played a fundamental role in the development of symplectic topology. Although the generic version has been proved for all symplectic manifolds, the general conjecture still remains open.\\
\\
Note that contactomorphisms do not necessarily have fixed points: for example, since the Reeb vector field never vanishes, the Reeb flow for small times does not have any fixed point. On the other hand, at least for $\mathcal{C}^1$-small (and even $\mathcal{C}^0$-small) contactomorphisms translated points always exist. In fact, as we will now discuss, Conjecture \ref{C} is true in this case. The proof is completely analogous to the proof of the Arnold conjecture in the $\mathcal{C}^1$-small and $\mathcal{C}^0$-small cases (for which we refer for example to \cite{MS}): the $\mathcal{C}^1$-small case only relies on Weinstein's neighborhood theorem for Legendrian submanifolds, while the $\mathcal{C}^0$-small case is deeper and relies on the existence of generating functions quadratic at infinity for contact deformations of the 0-section in the 1-jet bundle.\\
\\
We consider the contact product of $M$ with itself, i.e. the manifold $M\times M\times\mathbb{R}$ endowed with the contact structure given by the kernel of the 1-form $A=e^{\theta}\alpha_1-\alpha_2$ where $\theta$ is the coordinate on $\mathbb{R}$ and $\alpha_1$ and $\alpha_2$ are the pullback of $\alpha$ by the projections on the first and second factors respectively. Note that the Reeb vector field of $A$ is $R_A=(0,-R_{\alpha},0)$ where $R_{\alpha}$ is the Reeb vector field of $\alpha$ on $M$. We define the diagonal and the graph of $\phi$ in $M\times M\times\mathbb{R}$ to be the Legendrian submanifolds $\Delta=\{(q,q,0)\;|\; q\in M\}$ and $\text{gr}_{\phi}=\{\big(q,\phi(q),g(q)\big)\;|\; q\in M\}$ where $g$ is the function defined by $\phi^{\ast}\alpha=e^g\alpha$. Then translated points of $\phi$ correspond to Reeb  chords between $\Delta$ and $\text{gr}_{\phi}$. Indeed, a point $q$ of $M$ is a translated point for $\phi$ if and only if the corresponding point of the graph is of the form $\big(q,\phi(q),g(q)\big)=\big(q,(\varphi^{R_{\alpha}})_{t_0}(q),0\big)$ for some $t_0$, where $(\varphi^{R_{\alpha}})_t$ is the Reeb flow of $\alpha$. Note that this point is in the same Reeb orbit as the point $(q,q,0)$ of the diagonal. The problem of counting translated points of $\phi$ is thus reduced to the problem of counting Reeb chords between its graph and the diagonal. By Weinstein's neighborhood theorem for Legendrian submanifolds we know that a neighborhood of $\Delta$ in $M\times M\times\mathbb{R}$ is contactomorphic, by a contactomorphism that also preserves the contact form, to a neighborhood of the 0-section in the 1-jet bundle $J^1\Delta$ (see \cite[Theorem 2.2.4]{AH}). If we assume that $\phi$ is $\mathcal{C}^1$-small then its graph is contained in this neighborhood of $\Delta$, and so it corresponds to a Legendrian submanifold $\Gamma_{\phi}$ of $J^1\Delta$, which is moreover a section. Recall that all Legendrian sections of a 1-jet bundle are the 1-jet of a function on the base. We have thus that $\Gamma_{\phi}=j^1f=\{(q,df(q),f(q))\;|\; q\in \Delta\}$ for some function $f$ on $\Delta$. Translated points of $\phi$ correspond to Reeb chords between $\Gamma_{\phi}$ and the 0-section and hence to critical points of $f$. Since $\Delta$ is diffeomorphic to $M$, Conjecture \ref{C} follows under our $\mathcal{C}^1$-smallness assumption. The same conclusion can also be obtained if we assume that $\phi$ is the time-1 map of a $\mathcal{C}^0$-small contact isotopy. In this case the graph of $\phi$ corresponds to a contact deformation of the 0-section in $J^1\Delta$, and so we can apply Chekanov's theorem \cite{C} on existence of generating functions to obtain Morse or cup-length estimates for the number of translated points.\\
\\
Note that, in contrast to the symplectic case, the discussion above is not sufficient to show that Conjecture \ref{C} is sharp\footnote{I am very grateful to Will Merry for pointing this out to me.}. In the symplectic case, fixed points of a $\mathcal{C}^1$-small symplectomorphism $\varphi$ of a symplectic manifold $W$ correspond exactly to the critical points of the induced function $f$ on $W$ (fixed points of $\varphi$ correspond to intersections of the graph with the diagonal, hence to intersections of $df$ with the 0-section and hence to critical points of $f$). In the contact case, on the other hand, we do not have a 1-1 correspondence between translated points of a $\mathcal{C}^1$-small contactomorphism $\phi$ of a contact manifold $M$ and critical points of the associated function $f$: every critical point of $f$ corresponds to a translated point of $\phi$, but there could be translated points of $\phi$ that do not correspond to critical points of $f$. Indeed, translated points of $\phi$ correspond to Reeb chords between the graph of $\phi$ and the diagonal in $M\times M\times\mathbb{R}$, but not all such Reeb chords are necessarily contained in a Weinstein neighborhood of the diagonal. Thus, they might not correspond to Reeb chords between $j^1f$ and the 0-section in $J^1M$ and so to critical points of $f$. In other words, there might be a translated point $q$ of $\phi$ such that $\phi(q)$ belongs to a small neighborhood of $q$ (since $\phi$ is $\mathcal{C}^1$-small) but all Reeb chords connecting $q$ and $\phi(q)$ go out of this small neighborhood. This clearly cannot happen for $S^{2n-1}$ and $\mathbb{R}P^{2n-1}$ because the Reeb orbits are the Hopf fibers. Thus, Theorem \ref{main} is sharp. We do not know whether Conjecture \ref{C} is sharp in general.\\
\\
As discussed in \cite{mio4}, translated points are a special case of leafwise coisotropic intersections. Given a contactomorphism $\phi$ of $\big(M\,,\,\xi=\text{ker}(\alpha)\big)$ contact isotopic to the identity we can lift it to a Hamiltonian symplectomorphism $\widetilde{\phi}$ of the symplectizaton $\big(M\times \mathbb{R}\,,\,\omega=d(e^{\theta}\alpha)\big)$ by defining $\widetilde{\phi}(q,\theta)=\big(\phi(q),\theta-g(q)\big)$ where as usual $g$ is the function defined by $\phi^{\ast}\alpha=e^g\alpha$. Then translated points of $\phi$ correspond to leafwise intersections of $M\equiv M\times\{0\}$ and $\widetilde{\phi}(M)$. Recall that, given a coisotropic submanifold $Q$ of a symplectic manifold $(W,\omega)$ and a Hamiltonian symplectomorphism $\varphi$ of $W$, the leafwise intersections problem looks for points $q$ of $Q$ such that $\widetilde{\phi}(q)$ belongs to the same leaf as $q$ with respect to the characteristic foliation of $Q$. Leafwise intersections need not always exist in general, so when studying this problem it is really necessary to pose some condition either on the Hamiltonian symplectomorphism $\varphi$ or on the ambient symplectic manifold. A typical condition in the literature is a smallness condition for $\varphi$ with respect to the Hofer norm. As far as I know, the condition of $\varphi$ being the lift to the symplectization of a contactomorphism was considered for the first time in \cite{mio4}. We refer to this article for more details on the relation of our question with the problem of leafwise intersections. Here we will just illustrate the difference of the two approaches in the special case of the sphere $S^{2n-1}$.\\
\\
We regard $S^{2n-1}$ as the unit sphere in $\mathbb{R}^{2n}$. Then its characteristic foliation as a hypersurface of $\mathbb{R}^{2n}$ coincides with the Reeb foliation  as a contact manifold. By applying Theorem A of Albers-Frauenfelder \cite{AF2} to the special case of $S^{2n-1}$ we get the following result.

\begin{thm}[\cite{AF2}]\label{AF}
Let $\varphi$ be a Hamiltonian symplectomorphism of $\mathbb{R}^{2n}$ with Hofer norm smaller than the displacement energy of $S^{2n-1}$. Then there exists a leafwise intersection for $\varphi$. If moreover $\varphi$ is generic then there are at least 2 leafwise intersections.
\end{thm}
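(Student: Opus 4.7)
The plan is to invoke Theorem~A of \cite{AF2} applied with the hypersurface $\Sigma = S^{2n-1}$ sitting inside the symplectic manifold $(\mathbb{R}^{2n}, \omega_0)$; the proof thus reduces to checking the hypotheses of their theorem and identifying the quantitative bounds. First, I would verify that $S^{2n-1}$ satisfies the geometric requirements: the radial Liouville vector field $Y = \tfrac{1}{2}(x\partial_x + y\partial_y)$ is globally defined on $\mathbb{R}^{2n}$, satisfies $\mathcal{L}_Y \omega_0 = \omega_0$, and is transverse to $S^{2n-1}$, so $S^{2n-1}$ is a restricted contact-type hypersurface whose induced contact form is the standard $\alpha = \tfrac{1}{2}(xdy - ydx)$ and whose characteristic foliation coincides with the Hopf fibration. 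Moreover $S^{2n-1}$ is displaced by any translation of length greater than $2$, so its displacement energy $e(S^{2n-1})$ is a well-defined positive real number. These are precisely the inputs needed to set up the Rabinowitz action functional and its Floer theory as in \cite{AF2}.

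With the hypotheses checked, Theorem~A provides the first conclusion: a leafwise intersection exists whenever a Hamiltonian $F$ generating $\varphi$ satisfies $\|F\|_{\text{Hof}} < e(S^{2n-1})$. The mechanism is that critical points of the perturbed Rabinowitz action functional $\mathcal{A}^{H,F}(u,\eta)$ are in bijection with pairs consisting of a characteristic arc on $\Sigma$ together with a leafwise intersection point, and the Hofer bound forces compactness of the Floer moduli spaces via the a priori estimate on the Lagrange multiplier $\eta$. Consequently the Rabinowitz Floer homology is well-defined and invariant under deformation to $F \equiv 0$. In the unperturbed case the critical set contains the Morse--Bott component of constant loops $\Sigma \times \{0\}$, so the continuation argument produces a critical point of $\mathcal{A}^{H,F}$, hence a leafwise intersection.

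For the second assertion, under the genericity hypothesis the perturbed functional $\mathcal{A}^{H,F}$ is Morse, and the Morse--Bott-to-Morse continuation forces the number of critical points (in an appropriate action window) arising from the constant-loop component to be bounded below by the total $\mathbb{Z}_2$-Betti number of $S^{2n-1}$. Since $\sum_i \dim H_i(S^{2n-1}; \mathbb{Z}_2) = 2$, we obtain at least two leafwise intersection points.

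Since the theorem to be proved is only the specialization of \cite{AF2} to this setting, no genuinely new obstacle arises on our side; the deep step, namely the uniform bound on $|\eta|$ along Floer trajectories derived from the Hofer-norm hypothesis, is already encapsulated in Theorem~A. The main technical care on our end would be making sure that the Betti-number count coming from the Morse--Bott spectral sequence actually descends to a count of \emph{distinct} leafwise intersections (rather than distinct critical points of $\mathcal{A}^{H,F}$, some of which might a priori be paired by symmetries of the Reeb flow); here the point is that different values of $\eta$ correspond to distinct Reeb chord lengths, while critical points with the same $\eta$ but different $u(0) \in \Sigma$ give distinct leafwise intersections, so the two counts agree.
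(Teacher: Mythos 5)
Your proposal takes exactly the paper's route: the paper gives no independent proof of Theorem \ref{AF}, which is stated purely as the specialization of Theorem A of \cite{AF2} to the restricted contact type hypersurface $S^{2n-1}\subset\mathbb{R}^{2n}$, with the generic lower bound being $\sum_i\dim H_i(S^{2n-1};\mathbb{Z}_2)=2$, just as you argue. The one point to correct is your quotation of the hypothesis: Theorem A of \cite{AF2} bounds the Hofer norm by the minimal period $\wp(\Sigma)$ of a closed Reeb orbit, not by the displacement energy, so the statement as phrased here additionally uses the coincidence $\wp(S^{2n-1})=e(S^{2n-1})$ for the round sphere --- precisely the equality the paper records immediately after the theorem when explaining why it is sharp.
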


In the more general context in which this theorem is stated, the Hamiltonian symplectomorphism $\varphi$ is required to be smaller than the minimal period of a Reeb orbit, and the conclusion in the generic case is that the number of leafwise intersections is bounded below by the sum of the Betti numbers of the contact manifold. In the case of the sphere the minimal period of a Reeb orbit coincides with the displacement energy, thus Theorem \ref{AF} is sharp: there is no hope to improve it by improving the smallness bound for $\varphi$. However part (i) of our Theorem \ref{main} gives a different class of Hamiltonian symplectomorphisms of $\mathbb{R}^{2n}$ for which the same result is true: those Hamiltonian symplectomorphisms that are the lift to $\mathbb{R}^{2n}$ of a contactomorphism of $S^{2n-1}$. As far as I understand Theorem \ref{main}(i) and Theorem \ref{AF} are independent, and there is no clear way to deduce one from the other. Note that given the lift to $\mathbb{R}^{2n}$ of a contactomorphism of $S^{2n-1}$ we can cut off the Hamiltonian, without changing the leafwise intersections, so that it makes sense to look at its Hofer norm. However for an arbitrary contactomorphism of $S^{2n-1}$ this Hofer norm is in general not small.\\
\\
In contrast to the fact that $S^{2n-1}$ is displaceable in $\mathbb{R}^{2n}$, the following theorem of Ekeland and Hofer \cite{EH} shows that $S^{2n-1}$ is not displaceable by a $\mathbb{Z}_2$-equivariant Hamiltonian isotopy, with respect to the action of $\mathbb{Z}_2$ on $\mathbb{R}^{2n}$ given by $(x,y)\mapsto(-x,-y)$. In fact, they prove that it is not even leafwise displaceable.

\begin{thm}[\cite{EH}]\label{EH}
Let $\varphi$ be the time-1 map of a $\mathbb{Z}_2$-equivariant Hamiltonian isotopy of $\mathbb{R}^{2n}$. Then there exists a leafwise intersection for $\varphi$.
\end{thm}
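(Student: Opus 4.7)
The plan is to follow the Ekeland-Hofer strategy of variational methods, exploiting the antipodal symmetry through a $\mathbb{Z}_2$-equivariant min-max. The governing philosophy is that while $S^{2n-1}$ is freely displaceable in $\mathbb{R}^{2n}$, its image $\mathbb{R}P^{2n-1}$ is topologically nontrivial, so the $\mathbb{Z}_2$-symmetry should obstruct leafwise displacement.

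The first step is to reinterpret a leafwise intersection as a critical point. Let $H_t$ be a compactly supported time-dependent Hamiltonian generating $\varphi$; after averaging we may assume $H_t(-x,-y)=H_t(x,y)$. On the space of pairs $(x,T)$, with $x$ a smooth loop in $\mathbb{R}^{2n}$ and $T\in\mathbb{R}$ a Lagrange multiplier, I would work with a Rabinowitz-type action functional of the schematic form
\[
\mathcal{A}(x,T)\;=\;-\int_0^1 x^{\ast}\lambda\;-\;\int_0^1 H_t\bigl(x(t)\bigr)\,dt\;-\;T\!\int_0^1\!\bigl(\tfrac12|x(t)|^2-\tfrac12\bigr)\,dt,
\]
adapted so that its critical equations are $\dot x = X_{H_t}(x) + T\,R_{\alpha}(x)$ together with the constraint $x(\cdot)\subset S^{2n-1}$. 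A critical point then yields a point $q=x(0)\in S^{2n-1}$ with $\varphi(q)$ on the Hopf orbit through $q$, i.e. a leafwise intersection.

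The second step is to exploit the symmetry. The involution $(x(\cdot),T)\mapsto (-x(\cdot),T)$ preserves $\mathcal{A}$ by the equivariance of $H_t$, and it acts freely on any loop avoiding the origin. I would then apply Fadell-Rabinowitz $\mathbb{Z}_2$-cohomological index theory, running a min-max over equivariant families of loops contained in $S^{2n-1}$ with positive $\mathbb{Z}_2$-index. The nontriviality of $H^{\ast}(\mathbb{R}P^{\infty};\mathbb{Z}_2)$, together with a linking argument of Ekeland-Hofer type between the family of constant loops and the ``periodic spectrum'' of $\mathcal{A}$, forces the min-max level to be a genuine critical value, producing the desired leafwise intersection.

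The principal obstacle is compactness: $\mathbb{R}^{2n}$ is non-compact and the multiplier $T$ is a priori unbounded, so one must establish a Palais-Smale property for $\mathcal{A}$. The inputs are the compact support of $H_t$ and the convexity (more intrinsically, the contact-type character) of $S^{2n-1}$: the former gives a uniform bound on the Hamiltonian contribution, while the latter, via the standard Ekeland-Hofer estimate on $\int\alpha(\dot x)$, bounds $T$ along an approximate critical sequence and controls the $H^{1/2}$-norm of $x$. Once compactness is in place, a secondary check — that the min-max critical point cannot be a trivial constant loop at the origin — follows from the positivity of the $\mathbb{Z}_2$-index of the level at which it is detected, completing the argument.
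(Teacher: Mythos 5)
First, a point of order: the paper does not prove this statement at all. Theorem \ref{EH} is quoted from Ekeland--Hofer \cite{EH} purely as background, and the nearest result the paper actually proves is Theorem \ref{main}(ii) (Section \ref{RP}), which is obtained by entirely different means ($\mathbb{Z}_2$-invariant homogeneous generating functions and the Fadell--Rabinowitz index) and under a strictly stronger hypothesis: there the Hamiltonian symplectomorphism is in addition $\mathbb{R}_+$-equivariant, being the lift of a contactomorphism of $\mathbb{R}P^{2n-1}$. So your attempt can only be measured against the original variational argument of \cite{EH} (and its modern Rabinowitz--Floer descendants such as \cite{AF2}), whose general spirit --- equivariant min-max plus an index theory --- it does share.

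As a proof sketch, however, it has two genuine gaps. (a) The functional you write down does not have the critical points you claim: with the Hamiltonian term and the multiplier term switched on simultaneously, the critical point equations are $\dot x = X_{H_t}(x) + T X_G(x)$ together with only the \emph{mean-value} constraint $\int_0^1(\tfrac12|x(t)|^2-\tfrac12)\,dt=0$, and since the flow of $X_{H_t}+TX_G$ does not preserve $|x|$, a critical loop need not lie on $S^{2n-1}$, hence is not a leafwise intersection. (Demanding both your ODE and $x(\cdot)\subset S^{2n-1}$ is in fact overdetermined, because $X_{H_t}$ is not tangent to the sphere.) The standard repair is the Moser/Albers--Frauenfelder time-splitting trick --- Hamiltonian supported in $(\tfrac12,1)$, constraint weighted by a bump $\chi$ supported in $(0,\tfrac12)$ --- which your "adapted so that" glosses over. (b) More seriously, the $\mathbb{Z}_2$-action $(x,T)\mapsto(-x,T)$ is \emph{not} free: the zero loop is fixed for every $T$, and because of the multiplier term the restriction of $\mathcal{A}$ to this fixed locus, $\mathcal{A}(0,T)=\tfrac{T}{2}-\int_0^1 H_t(0)\,dt$, is onto $\mathbb{R}$. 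Any invariant set containing a fixed point has infinite Fadell--Rabinowitz index, so the competitor $A=\{(0,T)\}$ with $T\ll 0$ shows that the min-max values $\inf\{\,\sup_A\mathcal{A} \mid \text{ind}(A)\geq k\,\}$ all collapse to $-\infty$; and if one instead minimaxes over flow-deformations of a fixed free family, the required lower bound (your "linking") is precisely what the index machinery can no longer provide, since every sublevel set of $\mathcal{A}$ contains fixed points and hence has infinite index. This is not repaired by your "secondary check" at the end --- the breakdown occurs in the very definition and nontriviality of the min-max, not in identifying the final critical point --- nor by excising the origin, since the negative gradient flow does not preserve the space of loops avoiding the origin. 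Handling this fixed locus is exactly the hard core of Ekeland--Hofer's theorem, and it is exactly what $\mathbb{R}_+$-homogeneity accomplishes in the paper's own Section \ref{RP}: homogeneous invariant functions descend to a real projective space, where the origin has been deleted and the residual action is free. Your sketch, as it stands, offers no substitute for that step.
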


Note that, since there is no smallness assumption for the Hamiltonian symplectomorphism $\varphi$, this theorem implies in particular existence of at least one translated point for every contactomorphism of $\mathbb{R}P^{2n-1}$ contact isotopic to the identity. Part (ii) of our Theorem \ref{main} can thus be seen as a generalization of this conclusion.\\
\\
The proof of Theorem \ref{main} uses generating functions techniques in the setting developed by Th\'{e}ret \cite{Th1,Th2} and Givental \cite{Giv}. In \cite{Th1,Th2} Th\'{e}ret studies Hamiltonian isotopies of complex projective space and in particular proves the Arnold conjecture in this case, i.e. existence of at least $n$ fixed points for every Hamiltonian symplectomorphism of $\mathbb{C}P^{n-1}$ (a result that had already been obtained by Fortune and Weinstein \cite{FW,F} with different techniques). Th\'{e}ret obtains this result by lifting the problem to $\mathbb{R}^{2n}$ via the Hopf fibration $S^{2n-1}\rightarrow\mathbb{C}P^{n-1}$, and then by studying generating functions for Hamiltonian symplectomorphism of $\mathbb{R}^{2n}$ that are the lift of Hamiltonian symplectomorphism of $\mathbb{C}P^{n-1}$. A similar approach was also used by Givental in \cite{Giv} to define the non-linear Maslov index for contactomorphisms of $\mathbb{R}P^{2n-1}$. Our proof of Theorem \ref{main} is very much based on these two articles.\\
\\
Given a contactomorphism $\phi$ of $S^{2n-1}$ contact isotopic to the identity we lift it to a Hamiltonian symplectomorphism $\Phi$ of $\mathbb{R}^{2n}$. $\mathbb{R}_+$-lines of fixed points of $\Phi$ correspond to \textit{discriminant points} of $\phi$, i.e. points $q$ of $S^{2n-1}$ with $\phi(q)=q$ and $g(q)=0$ where as usual $g$ is the function defined by $\phi^{\ast}\alpha=e^g\alpha$. Following Th\'{e}ret \cite{Th1,Th2} we show that $\Phi$ has a generating function $F:\mathbb{R}^{2n}\times\mathbb{R}^{2N}\rightarrow\mathbb{R}$ which is homogeneous of degree 2 and hence induces a function $f$ on the unit sphere $S^{2n+2N-1}$. Critical points of $F$ corresponds to fixed points of $\Phi$. They come in $\mathbb{R}_+$-lines and always have critical value 0. To find translated points of $\phi$ we consider the contact isotopy $a_t\circ\phi$, where $a_t$ is the negative Reeb flow $z\mapsto e^{-2\pi i t}z$, and a 1-parameter family $F_t$ of generating functions for the lifted Hamiltonian isotopy of $\mathbb{R}^{2n}$. Translated points of $\phi$ correspond to the union for all $t$ in $[0,1]$ of the critical points with critical value 0 of the induced functions $f_t$ on the unit sphere of the domain of $F_t$. By studying the change in the topology of $f_t^{\phantom{t}-1}(0)$ for $t$ varying in $[0,1]$ one sees that in the generic case $\phi$ must have at least $2$ translated points. In the case of a contactomorphism $\phi$ of $\mathbb{R}P^{2n-1}$ we apply the same argument to a lift to $S^{2n-1}$. We prove that the corresponding generating functions $F_t$ in this case are $\mathbb{Z}_2$-invariant and thus induce functions $f_t$ on a real projective space. We then show that this difference in the topology of the domain of the generating functions implies the existence of $2n$ instead of only $2$ translated points. The main tool in this $\mathbb{Z}_2$-equivariant case is the cohomological index for subsets of real projective space, that was introduced  by Fadell and Rabinowitz \cite{FR} and also used in \cite{Giv}.\\
\\
This article is organized as follows. In the next section we give some preliminaries on contactomorphisms, and in particular we explain how to lift our problem to $\mathbb{R}^{2n}$. In Section \ref{gf} we describe generating functions in this context. Our treatment is mostly based on \cite{Th1,Th2}. Finally in Sections \ref{S} and \ref{RP} we prove respectively parts (i) and (ii) of Theorem \ref{main}.

\subsection*{Acknowledgments} This work was done during my postdoc at the Laboratoire Jean Leray of Nantes, supported by an ANR GETOGA fellowship. I thank Vincent Colin and Paolo Ghiggini for discussions and support, and the whole Laboratoire Jean Leray for providing such a pleasant and stimulating ambiance. I am very grateful to the organizers of the Conference on Geometrical Methods in Dynamics and Topology in Hanoi, the Conference of the Trimester on Contact and Symplectic Topology in Nantes, the GESTA 2011 Conference in Castro Urdiales and the Workshop on Conservative Dynamics and Symplectic Geometry in Rio de Janeiro for giving me the opportunity to participate to these events and to present there the results contained in this article. I also thank Carlos Moraga and Fran\c{c}ois Laudenbach for convincing me that I needed the non-degeneracy condition in part (i) of Theorem \ref{main}. Finally I would like to take the opportunity to thank again my PhD supervisor Miguel Abreu because it is especially now that I realize how much I am benefiting from his choice to concentrate his efforts as a supervisor in teaching me the most difficult thing a supervisor can teach, i.e. how to find and recognize a good mathematical problem.

\section{Preliminaries}\label{bg}

Let $(M^{2n-1},\xi)$ be a contact manifold, i.e. an odd dimensional manifold $M$ endowed with a maximally non-integrable distribution $\xi$ of hyperplanes. We will always assume that $\xi$ is cooriented, so that it can be written as the kernel of a 1-form $\alpha$ such that $\alpha\wedge (d\alpha)^{n-1}$ is a volume form. A diffeomorphism $\phi$ of $M$ is called a contactomorphism if it preserves the contact distribution $\xi$ and its coorientation, i.e. if $\phi^{\ast}\alpha=e^g\alpha$ for some function $g: M \rightarrow \mathbb{R}$. An isotopy $\phi_t$, $t\in[0,1]$, of $M$ is called a contact isotopy if every $\phi_t$ is a contactomorphism.\\
\\
An important example of a contact isotopy is given by the \textit{Reeb flow} $(\varphi^{R_{\alpha}})_t $ associated to a contact form $\alpha$, i.e. the flow of the vector field $R_{\alpha}$ which is defined by the two conditions $\iota_{R_{\alpha}}d\alpha=0$ and $\alpha(R_{\alpha})=1$. The definition of the Reeb flow is a special case of a more general construction that associates to every time-dependent function $h_t:M\rightarrow \mathbb{R}$ a contact isotopy $\phi_t$ of $M$ starting at the identity. The isotopy $\phi_t$ is defined to be the flow of the vector field $X_t$ determined by the two conditions $\iota_{X_t}d\alpha=dh_t(R_{\alpha})\alpha-dh_t$ and $\alpha(X_t)=h_t$. One can check that $\phi_t$ is a contact isotopy, and moreover that it is the unique contact isotopy satisfying the condition $\alpha(X_t)=h_t$. The time-dependent function $h_t:M\rightarrow \mathbb{R}$ is called the \textit{(contact) Hamiltonian} of the contact isotopy $\phi_t$. \\
\\
Contact manifolds are intimately related to symplectic manifolds. A symplectic manifold is an even dimensional manifold $W$ endowed with a closed non-degenerate 2-form $\omega$. A diffeomorphism $\Phi$ of $W$ is called a symplectomorphism if $\Phi^{\ast}\omega=\omega$. Similarly to the contact case, every time-dependent function $H_t:W\rightarrow\mathbb{R}$ induces a symplectic isotopy $\Phi_t$ of $W$, which is defined to be the flow of the vector field $X_t$ determined by the condition $\iota_{X_t}\omega=dH_t$. However, in contrast with the contact case, not all symplectic isotopies can be obtained in this way. Those isotopies that that are obtained by a time-dependent function via this construction are called \textit{Hamiltonian isotopies}.\\
\\
A way to relate symplectic and contact manifolds is given by associating to a contact manifold $(M,\xi)$ its \textit{symplectization}. If we fix a contact form $\alpha$ for $\xi$ then the symplectization of $M$ is defined to be the manifold $SM=M\times\mathbb{R}$ endowed with the symplectic form $\omega=d(e^{\theta}\alpha)$, where $\theta$ is the $\mathbb{R}$-coordinate. Every contactomorphism $\phi$ of $M$ can be lifted to a symplectomorphism $\Phi$ of $SM$ by the formula $\Phi(q,\theta)=\big(\phi(q)\,,\,\theta-g(q)\big)$ where $g$ is the function on $M$ satisfying $\phi^{\ast}\alpha=e^g\alpha$. Given a contact isotopy $\phi_t$ of $M$ generated by the contact Hamiltonian $h_t:M\rightarrow \mathbb{R}$, the lift $\Phi_t$ is a Hamiltonian isotopy of $SM$ which is generated by the Hamiltonian function $H_t: SM \rightarrow\mathbb{R}$, $H_t(q,\theta)=e^{\theta}h_t(q)$.\\
\\
We can now define the central objects of study of this paper.

\begin{defi}
Let $(M,\xi)$ be a contact manifold with a fixed contact form $\alpha$. Consider a contactomorphism $\phi$ of $M$ and let $g$ be the function satisfying $\phi^{\ast}\alpha=e^g\alpha$. A point $q$ of $M$ is called a \textbf{translated point} of $\phi$ (with respect to the contact form $\alpha$) if $q$ and its image $\phi(q)$ belong to the same Reeb orbit, and if moreover $g(q)=0$ (i.e. the contact form is preserved at $q$). A point $q$ of $M$ is called a \textbf{discriminant point} of $\phi$ if it is a translated point which is also a fixed point, i.e. if $\phi(q)=q$ and $g(q)=0$.
\end{defi}

Discriminant points of contactomorphisms were first studied by Givental in \cite{Giv}.\\
\\
Note that discriminant points of a contactomorphism $\phi$ of $M$ correspond to $\mathbb{R}$-lines of fixed points for the lifted symplectomorphism $\Phi$ of $SM$: $q$ is a discriminant point of $\phi$ if and only if $(q,\theta)$ is a fixed point of $\Phi$, for all $\theta\in\mathbb{R}$. We will say that a discriminant point $q$ of $\phi$ is \textit{non-degenerate} if there are no tangent vectors $X$ of $M$ at $q$ satisfying simultaneously $\phi_{\ast}X=X$ and $X(g)=0$. In terms of the lifted symplectomorphism $\Phi$ of $SM$, this condition is equivalent to saying that there should be no $X$ such that $\Phi_{\ast}(X,v)=(X,v)$ for some (and hence for all) $v\in T_{\theta}\mathbb{R}$. Here we regard $(X,v)$ as a tangent vector of $SM=M\times\mathbb{R}$ at any point $(q,\theta)$ above $q$. Note that we always have $\Phi_{\ast}(0,v)=(0,v)$ so, even if $q$ is a non-degenerate discriminant point of $\phi$, all points $(q,\theta)$ above it are degenerate fixed points of $\Phi$ (however, they are non-degenerate in the horizontal direction). A translated point $q$ of $\phi$ is said to be non-degenerate if $q$ is a non-degenerate discriminant point of $a_t\circ\phi$ where $a_t=(\varphi^{R_{\alpha}})_{-t}$ and $t$ is the smallest time for which $\phi(q)=(\varphi^{R_{\alpha}})_t(q)$.\\
\\
We will now specialize to the case of the sphere $S^{2n-1}$ endowed with its standard contact structure $\xi$. Recall that $\xi$ is defined to be the kernel of the restriction to $S^{2n-1}$ of the 1-form $\alpha=xdy-ydx$, where we regard $S^{2n-1}$ as the unit sphere in $\mathbb{R}^{2n}$ and where $(x,y)$ are the coordinates on $\mathbb{R}^{2n}$. Equivalently, after identifying $\mathbb{R}^{2n}$ with $\mathbb{C}^n$, $\xi$ is defined as the intersection of $TS^{2n-1}$ with $J(TS^{2n-1})$ where $J$ is the complex structure on $\mathbb{C}^n$. For the rest of the paper we will fix the contact form $\alpha$. Its Reeb flow is given by the Hopf fibration $z\mapsto e^{2\pi i t}z$.\\
\\
Let $\phi$ be a contactomorphism of $S^{2n-1}$, contact isotopic to the identity. As explained in the introduction, our goal is to show that $\phi$ has at least $2$ translated points. We will do this in Section \ref{S}. The strategy will be to count discriminant points of $a_t\circ\phi$ for $t\in[0,1]$, where $a_t$ is the negative Reeb flow $z\mapsto e^{-2\pi i t}z$. We will concentrate on discriminant points of contactomorphisms of $S^{2n-1}$ because they correspond to lines of fixed points for the lifted Hamiltonian symplectomorphism of $\mathbb{R}^{2n}$. As we will recall in the next section, fixed points of a symplectomorphism of $\mathbb{R}^{2n}$ correspond to critical points of its generating function, and can thus be detected by Morse theoretical arguments.  \\
\\
We will now explain how to lift our problem to $\mathbb{R}^{2n}$.\\
\\
Given a contactomorphism $\phi$ of $S^{2n-1}$ we can lift it to a symplectomorphism $\Phi$ of $\mathbb{R}^{2n}\equiv \mathbb{C}^n$ by the formula
\begin{equation}\label{lift}
\Phi(z)= \frac{|z|}{ e^{ \frac{1}{2}\,g( \frac{z}{|z|} ) }  }\,\phi(\frac{z}{|z|})
\end{equation}
where $g:S^{2n-1}\rightarrow S^{2n-1}$ is the function determined by $\phi^{\ast}\alpha=e^g\alpha$. Given a contact isotopy $\phi_t$ of $S^{2n-1}$ generated by the contact Hamiltonian $h_t: S^{2n-1} \rightarrow \mathbb{R}$, the lift $\Phi_t$ is the Hamiltonian isotopy of $\mathbb{R}^{2n}$ which is generated by the Hamiltonian function $H_t$ defined by $H_t(z)=|z|^2\,h_t(\frac{z}{|z|})$.

\begin{rmk}\label{diff}
The lift $\Phi$ is only defined on $\mathbb{R}^{2n}\smallsetminus \{0\}$, but we will extend it to $\mathbb{R}^{2n}$ by posing $\Phi(0)=0$. Then $\Phi$ is continuous everywhere and smooth on $\mathbb{R}^{2n}\smallsetminus \{0\}$. A similar observation also holds for the Hamiltonian function $H_t$.
\end{rmk}

Note that we can identify $\mathbb{R}^{2n}\smallsetminus \{0\}$ with the symplectization of $S^{2n-1}$ by the symplectomorphism $S^{2n-1}\times\mathbb{R} \rightarrow \mathbb{R}^{2n}\smallsetminus \{0\}$, $(q,\theta) \mapsto\sqrt{2}\,e^{\frac{\theta}{2}}q$. Under this identification, formula (\ref{lift}) reduces to the formula we gave above for the lift of a contactomorphism to the symplectization.\\
\\
Note that the lift $\Phi: \mathbb{R}^{2n}\rightarrow \mathbb{R}^{2n}$ of a contactomorphism $\phi$ of $S^{2n-1}$ is equivariant with respect to the radial $\mathbb{R}_+$-action, i.e. $\Phi(\lambda z)=\lambda \,\Phi(z)$ for all $\lambda\in\mathbb{R}_+$. Moreover the Hamiltonian function $H_t:\mathbb{R}^{2n}\rightarrow\mathbb{R}$ of the lift of a contact isotopy $\phi_t$ of $S^{2n-1}$ is homogeneous of degree 2, i.e. we have that $H_t(\lambda z) = \lambda^2 H_t(z)$ for all $\lambda\in\mathbb{R}_+$.\\
\\
As we will see in the next section, also generating functions of Hamiltonian isotopies of $\mathbb{R}^{2n}$ that are the lift of contact isotopies of $S^{2n-1}$ are homogeneous of degree $2$. In the rest of the article we will sometimes just write \textacutedbl homogeneous\textgravedbl  for \textacutedbl homogeneous of degree 2\textgravedbl.

\section{Homogeneous generating functions}\label{gf}

Let $\Phi$ be a Hamiltonian symplectomorphism of $\mathbb{R}^{2n}$. We will recall in this section how to associate to $\Phi$ a \textit{generating function} $F:\mathbb{R}^{2n}\times\mathbb{R}^{2N}\rightarrow\mathbb{R}$. The importance of this function is given by the fact that its Morse theory reflects the symplectic properties of $\Phi$. In particular, it has the crucial property that its critical points are in 1-1 correspondence with the fixed points of $\Phi$. After recalling how to construct a generating function for a Hamiltonian symplectomorphism $\Phi$ of $\mathbb{R}^{2n}$, we will show (following Th\'{e}ret \cite{Th2}) that if $\Phi$ is the lift of a contactomorphism $\phi$ of $S^{2n-1}$ then its generating function is homogeneous of degree 2. In the last part of this section we will then discuss two facts that will be crucial in the proof of Theorem \ref{main} . The first fact is monotonicity of generating functions (Lemma \ref{monotonicity}). The second is the fact that the negative Reeb flow $a_t$, $t\in[0,1]$, of $S^{2n-1}$ is generated by a 1-parameter family $A_t$ of quadratic forms such that the difference of the indices of $A_1$ and $A_0$ is equal to $2n$.\\
\\
Generating functions are functions associated to Lagrangian submanifolds of the cotangent bundle $T^{\ast}B$ of a smooth manifold $B$. Recall that an $n$-dimensional submanifold $L$ of a $2n$-dimensional symplectic manifold $(W,\omega)$ is called Lagrangian if $\omega|_{L}\equiv0$. Recall also that the cotangent bundle $T^{\ast}B$ has a canonical symplectic form $\omega_{\text{can}}$ given by $\omega_{\text{can}}=d(-\lambda_{\text{can}})$ where $\lambda_{\text{can}}$ is the tautological 1-form on $T^{\ast}B$: for a tangent vector $X$ at a point $\sigma$ of $T^{\ast}B$, $\lambda_{\text{can}}(X)=\sigma\big(\pi_{\ast}(X)\big)$ where $\pi$ is the projection $T^{\ast}B\rightarrow B$. Lagrangian sections of $T^{\ast}B$ are the graphs of closed 1-forms on $B$. Lagrangian sections that are Hamiltonian isotopic to the 0-section correspond to exact 1-forms and so they are the graph of the differential of a smooth function $f:B\rightarrow\mathbb{R}$. This function is then called a generating function for the corresponding Lagrangian section. To obtain generating functions of Lagrangian submanifolds of $T^{\ast}B$ that are not necessarily sections we use the following more general construction, which was introduced by H\"{o}rmander \cite{Hor}. Consider a fiber bundle $\pi: E\rightarrow B$. A function $F:E\rightarrow \mathbb{R}$ is called a generating function if $dF: E\longrightarrow T^{\ast}E$ is transverse to the fiber normal bundle $N_E:=\{\:(e,\mu)\in T^{\ast}E \;|\; \mu = 0 \;\text{on} \;\emph{ker}\,\big(d\pi\,(e)\big)\:\}$. Note that in this case the set $\Sigma_F=(dF)^{-1}(N_E)$ of fiber critical points of $F$ is a submanifold of $E$, of dimension equal to the dimension of $B$. Consider now the Lagrangian immersion $i_F:\Sigma_F\longrightarrow T^{\ast}B$, $e\mapsto \big(\pi(e),v^{\ast}(e)\big)$ where the element $v^{\ast}(e)$ of $T^{\phantom{\pi}\ast}_{\pi(e)}B$ is defined by $v^{\ast}(e)\,(X):=dF\,(\widehat{X})$ for $\widehat{X}$ any lift to $T_eE$ of the vector $X \in T_{\pi(e)}B$. $F$ is called a generating function for the image of $i_F$ in $T^{\ast}B$. Note that critical points of $F$ correspond under $i_F$ to intersections of $L$ with the 0-section.\\
\\
Consider now a Hamiltonian symplectomorphism $\Phi$ of $\mathbb{R}^{2n}$. Its graph $\{\big(z,\Phi(z)\big)\,,\,z\in\mathbb{R}^{2n}\}$ is a Lagrangian submanifold of $\overline{\mathbb{R}^{2n}}\times\mathbb{R}^{2n}=\big(\mathbb{R}^{2n}\times\mathbb{R}^{2n},-\omega\oplus\omega\big)$, and we will denote by $\Gamma_{\Phi}$ the Lagrangian submanifold of $T^{\ast}\mathbb{R}^{2n}$ which is the image of the graph of $\Phi$ by the symplectomorphism $\tau: \overline{\mathbb{R}^{2n}}\times\mathbb{R}^{2n}\rightarrow T^{\ast}\mathbb{R}^{2n}$, $(x,y, X, Y)\mapsto (\frac{x+X}{2},\frac{y+Y}{2}, Y-y, x-X)$. A generating function for $\Phi$ is by definition a generating function for the Lagrangian submanifold $\Gamma_{\Phi}$
of $T^{\ast}\mathbb{R}^{2n}$. Note that, since $\tau$ sends the diagonal of $\overline{\mathbb{R}^{2n}}\times\mathbb{R}^{2n}$ to the 0-section of $T^{\ast}\mathbb{R}^{2n}$, critical points of $F$ are in 1-1 correspondence with fixed points of $\Phi$.

\begin{prop}\label{ex_gf_ne}
Every Hamiltonian symplectomorphism $\Phi$ of $\mathbb{R}^{2n}$ has a generating function $F:\mathbb{R}^{2n}\times\mathbb{R}^{2N}\rightarrow\mathbb{R}$.
\end{prop}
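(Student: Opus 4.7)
The plan is to follow the classical decomposition-and-composition argument of Chaperon and Laudenbach--Sikorav: I would break $\Phi$ into a composition of symplectomorphisms each sufficiently $C^1$-close to the identity, produce a generating function for each small piece by a direct local argument, and then glue these together into one generating function $F : \mathbb{R}^{2n} \times \mathbb{R}^{2N} \to \mathbb{R}$ whose fiber variables record the intermediate points of a broken trajectory.

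Concretely, I would write $\Phi = \Phi_1$ for a Hamiltonian isotopy $\{\Phi_t\}_{t \in [0,1]}$ and choose $N$ large enough that every piece $\Phi^{(i)} := \Phi_{i/N} \circ \Phi_{(i-1)/N}^{-1}$ is so $C^1$-close to the identity that $\tau\bigl(\mathrm{gr}(\Phi^{(i)})\bigr) \subset T^\ast\mathbb{R}^{2n}$ is a Lagrangian section of $\pi : T^\ast\mathbb{R}^{2n} \to \mathbb{R}^{2n}$. Since every Lagrangian section of $T^\ast\mathbb{R}^{2n}$ is the graph of an exact $1$-form, this section must equal the graph of $dS_i$ for some smooth $S_i : \mathbb{R}^{2n} \to \mathbb{R}$. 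So each small piece individually admits a generating function with no auxiliary fiber variables, namely $S_i$ itself.

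To assemble these into a single function for the whole composition, I would introduce auxiliary variables $v = (z_1, \ldots, z_{N-1}) \in \mathbb{R}^{2n(N-1)}$ encoding the intermediate points of a broken trajectory $z_0 \to z_1 \to \cdots \to z_N$ under $\Phi^{(1)}, \ldots, \Phi^{(N)}$, with base variable $w = (z_0 + z_N)/2 \in \mathbb{R}^{2n}$, so that $z_0$ and $z_N$ are determined by $w$ and $v$ via linear algebra. Following Chaperon, I would set $F(w, v)$ to be a sum of the $S_i$ evaluated at the midpoints $(z_{i-1} + z_i)/2$, plus a purely quadratic combinatorial correction term built from the standard symplectic form on $\mathbb{R}^{2n}$, arranged so that the fiber-critical equations $\partial F / \partial v = 0$ force exactly $(z_{i-1}, z_i) \in \mathrm{gr}(\Phi^{(i)})$ for each $i$, equivalently $z_N = \Phi(z_0)$, and so that the Lagrangian immersion $i_F$ into $T^\ast\mathbb{R}^{2n}$ reproduces $\Gamma_\Phi$. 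Renaming $2n(N-1)$ as $2N$ yields the announced $F : \mathbb{R}^{2n} \times \mathbb{R}^{2N} \to \mathbb{R}$.

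The main technical point, and what I expect to be the real work, will be verifying that $F$ is a genuine generating function in the sense of the transversality condition $dF \pitchfork N_E$; this amounts to non-degeneracy of the fiber Hessian $\partial^2 F / \partial v^2$ along the fiber-critical locus, and reduces to the invertibility of a block-tridiagonal matrix whose diagonal blocks are controlled by the $C^1$-smallness of the $\Phi^{(i)}$. This is precisely where the freedom to take $N$ large will be exploited. Once transversality is in hand, the bijection between critical points of $F$ and fixed points of $\Phi$ is automatic, since $\tau$ sends the diagonal of $\overline{\mathbb{R}^{2n}}\times\mathbb{R}^{2n}$ to the zero section of $T^\ast\mathbb{R}^{2n}$, so that critical points correspond via $i_F$ to intersections of $\Gamma_\Phi$ with the zero section.
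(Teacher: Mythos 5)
Your first step coincides exactly with the paper's: subdivide a Hamiltonian isotopy from the identity to $\Phi$ into $\mathcal{C}^1$-small pieces, each of whose graphs becomes, under $\tau$, a Lagrangian section of $T^{\ast}\mathbb{R}^{2n}$ and is therefore generated by a function $S_i:\mathbb{R}^{2n}\rightarrow\mathbb{R}$ with no fiber variables. Where you diverge is the assembly: the paper simply iterates Th\'eret's composition formula (Lemma \ref{cf}), quoted as a black box that already contains the transversality statement, whereas you attempt one global broken-trajectory functional \`a la Chaperon. That alternative is legitimate in principle, but as written it has a concrete gap: your $F$ is not well defined. With base variable $w=(z_0+z_N)/2$ and fiber variables $v=(z_1,\dots,z_{N-1})$ you know only the sum $z_0+z_N=2w$, so $z_0$ and $z_N$ are \emph{not} determined by $(w,v)$, and the terms $S_1\big(\tfrac{z_0+z_1}{2}\big)$ and $S_N\big(\tfrac{z_{N-1}+z_N}{2}\big)$ cannot be evaluated. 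The problem is structural, not notational: tuples $(z_0,\dots,z_N)$ form a $2n(N+1)$-dimensional space, so once the base accounts for $2n$ directions the fiber must have dimension $2nN$, not $2n(N-1)$. The count $2n(N-1)$ belongs to Chaperon's method in mixed coordinates $(x,Y)$, which uses a different identification of graphs with Lagrangians and does not generate $\Gamma_{\Phi}$ in the sense required here. This bookkeeping is precisely what Th\'eret's formula handles: each application of $\sharp$ introduces \emph{two} auxiliary copies of $\mathbb{R}^{2n}$ together with the coupling term $2\langle u-v,iw\rangle$.

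There is a second gap in your transversality plan. Transversality of $dF$ to $N_E$ asks that the combined matrix $\big[\,\partial^2F/\partial w\,\partial v \;\;\; \partial^2F/\partial v^2\,\big]$ have full rank along $\Sigma_F$; this is strictly weaker than invertibility of the fiber Hessian $\partial^2F/\partial v^2$, and in the midpoint convention the stronger condition you invoke \emph{must} fail in general. Indeed, if the fiber Hessian were invertible at a fiber-critical point, the implicit function theorem would show that the generated Lagrangian is a section near the corresponding point; but $\Gamma_{\Phi}$ is locally a section only where $z\mapsto\big(z+\Phi(z)\big)/2$ is a local diffeomorphism, which fails badly for instance for $\Phi=-\mathrm{id}$ (a Hamiltonian symplectomorphism, for which $\Gamma_{\Phi}$ is the whole cotangent fiber over the origin). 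So no choice of $N$ makes the fiber Hessian invertible along the critical locus, and the block-tridiagonal invertibility you anticipate---which is correct in the mixed-coordinate setting---is not the right criterion here; only the weaker rank condition can hold, and it is a different verification. Both gaps are repairable: with $2nN$ fiber variables parametrizing the full broken trajectory and the correct quadratic coupling, your construction becomes in effect the iterated composition formula, which is exactly how the paper proceeds.
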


This result is classical. We include the proof because it will be needed in the rest of the article. We first need the following composition formula, which is due to Th\'{e}ret \footnote{An analogous composition formula, which holds if we use the identification $(x,y,X,Y)\mapsto(y,X,x-X,Y-y)$ of $\overline{\mathbb{R}^{2n}}\times\mathbb{R}^{2n}$ with $T^{\ast}\mathbb{R}^{2n}$, was given by Chekanov \cite{C} and used for example in \cite{Tr}.}.

\begin{lemma}[Composition Formula \cite{Th2}]\label{cf}
Let $\Phi$ and $\Psi$ be Hamiltonian symplectomorphisms of $\mathbb{R}^{2n}$ with generating functions respectively $F:\mathbb{R}^{2n}\times\mathbb{R}^{2N}\rightarrow\mathbb{R}$ and  $G:\mathbb{R}^{2n}\times\mathbb{R}^{2N'}\rightarrow\mathbb{R}$. Then the function $F\sharp G: \mathbb{R}^{2n}\times (\mathbb{R}^{2n}\times\mathbb{R}^{2n}\times\mathbb{R}^{2N}\times\mathbb{R}^{2N'})\rightarrow\mathbb{R}$ defined by $F\sharp G \,(u;v,w,\mu,\eta)=F(u+w;\mu) + G(v+w;\eta) + 2<u-v,iw>$ is a generating function for $\Psi\circ\Phi$.
\end{lemma}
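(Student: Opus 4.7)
The plan is a direct verification by computing the fiber critical equations of $F\sharp G$ and decoding them through the generating-function dictionary. A first observation, read off from the formula for $\tau$, is that $\tau(z_0,z_1) = \bigl(\tfrac{z_0+z_1}{2},\, i(z_0-z_1)\bigr)$, where $i$ denotes the standard complex structure on $\mathbb{R}^{2n}\cong\mathbb{C}^n$. Consequently, $F$ generates $\Phi$ iff at every fiber critical point $(z,\mu)$ there is a unique $z_0\in\mathbb{R}^{2n}$ with $z=\tfrac{z_0+\Phi(z_0)}{2}$ and $\partial_z F(z,\mu)=i\bigl(z_0-\Phi(z_0)\bigr)$, and analogously for $G$ and $\Psi$.

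Now take a fiber critical point $(u;v,w,\mu,\eta)$ of $F\sharp G$. The equations $\partial_\mu=0$ and $\partial_\eta=0$ exhibit $(u+w,\mu)$ and $(v+w,\eta)$ as fiber critical points of $F$ and $G$ respectively, producing $z_0,z_0'$ with $z_1:=\Phi(z_0)$, $z_1':=\Psi(z_0')$, midpoint relations $u+w=\tfrac{z_0+z_1}{2}$, $v+w=\tfrac{z_0'+z_1'}{2}$, and gradients $\partial_z F(u+w,\mu)=i(z_0-z_1)$, $\partial_z G(v+w,\eta)=i(z_0'-z_1')$. Using skew-symmetry $i^T=-i$, the equations $\partial_v=0$ and $\partial_w=0$ become
\[
i(z_0'-z_1')=2iw \qquad\text{and}\qquad i(z_0-z_1)+i(z_0'-z_1')=2i(u-v).
\]
The first yields $w=\tfrac{z_0'-z_1'}{2}$, which, combined with the midpoint relation for $G$, gives $v=z_1'$. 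Subtracting the two midpoint relations gives $2(u-v)=(z_0+z_1)-(z_0'+z_1')$, and comparing with the $\partial_w$ equation collapses to $z_0'=z_1$. Thus the intermediate variable matches, $z_1'=\Psi\Phi(z_0)$, and the fiber critical set of $F\sharp G$ is naturally parametrized by $z_0\in\mathbb{R}^{2n}$.

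It remains to read off the base data. Substituting $w=\tfrac{z_1-z_1'}{2}$ into $u+w=\tfrac{z_0+z_1}{2}$ gives $u=\tfrac{z_0+\Psi\Phi(z_0)}{2}$, and
\[
\partial_u(F\sharp G)=\partial_z F(u+w,\mu)+2iw=i(z_0-z_1)+i(z_1-z_1')=i\bigl(z_0-\Psi\Phi(z_0)\bigr),
\]
which under $\tau$ is exactly the point of $\Gamma_{\Psi\circ\Phi}$ above $z_0$. Thus the Lagrangian image attached to $F\sharp G$ is $\Gamma_{\Psi\circ\Phi}$. The one remaining verification is the transversality of $d(F\sharp G)$ to the fiber normal bundle; this follows from the corresponding transversality for $F$ and $G$ plus the observation that the extra term $2\langle u-v,iw\rangle$ is non-degenerate in the pair $(v,w)$, so no new tangential obstruction is introduced. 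Conceptually the argument is routine once the identification $\tau(z_0,z_1)=(\tfrac{z_0+z_1}{2},i(z_0-z_1))$ is in hand; the only place where I expect to have to be careful is the sign bookkeeping in the $\partial_w$ computation, where the skew-symmetry of $i$ turns the Euclidean pairing into the symplectic datum implicit in $\tau$.
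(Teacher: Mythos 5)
Your computation of the fiber critical equations is correct and complete as far as it goes: the identification $\tau(z_0,z_1)=\bigl(\tfrac{z_0+z_1}{2},\,i(z_0-z_1)\bigr)$ is right, and the chain $w=\tfrac{z_0'-z_1'}{2}$, $v=z_1'$, $z_0'=z_1$, $u=\tfrac{z_0+\Psi\Phi(z_0)}{2}$, $\partial_u(F\sharp G)=i\bigl(z_0-\Psi\Phi(z_0)\bigr)$ all checks out, so the Lagrangian attached to $F\sharp G$ is indeed $\Gamma_{\Psi\circ\Phi}$. (For the record, the paper gives no proof of this lemma at all --- it is quoted from Th\'eret --- so your argument can only be judged on its own merits.)

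The genuine gap is the transversality step, i.e.\ the claim that $d(F\sharp G)$ is transverse to $N_E$ ``because of the transversality for $F$ and $G$ plus the non-degeneracy of $2\langle u-v,iw\rangle$ in $(v,w)$.'' That implication is false, and one can see it fail on inputs satisfying exactly the hypotheses you invoke. Take $n=1$ and $F(z;\mu)=G(z;\mu)=y\,\mu$ where $z=(x,y)$ and $\mu\in\mathbb{R}$: each is a genuine generating function in the paper's sense (it generates the conormal bundle of the line $\{y=0\}$, which under $\tau^{-1}$ is the non-graphical Lagrangian $\{((a,0),(b,0))\}$), and the coupling term in $F\sharp G$ is the same non-degenerate pairing. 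Yet writing $u=(u_1,u_2)$, $v=(v_1,v_2)$, $w=(w_1,w_2)$ one finds the identity $\partial_{w_1}(F\sharp G)\equiv 2\bigl(\partial_{\mu}(F\sharp G)-\partial_{\eta}(F\sharp G)\bigr)$, so the fiber derivative map of $F\sharp G$ takes values in a proper subspace, is nowhere a submersion, and the fiber critical set has excess dimension: $F\sharp G$ is \emph{not} a generating function. What rescues the actual lemma is precisely the hypothesis your transversality argument never uses, namely that $\Gamma_\Phi$ and $\Gamma_\Psi$ are graphs of diffeomorphisms. Concretely, to get surjectivity of the differential of $(v,w,\mu,\eta)\mapsto\bigl(\partial_v,\partial_w,\partial_\mu,\partial_\eta\bigr)(F\sharp G)$ one should check that the two ``twisted'' linear maps $(\delta z,\delta\mu)\mapsto\bigl(D(\partial_zF)(\delta z,\delta\mu)-2i\delta z,\;D(\partial_\mu F)(\delta z,\delta\mu)\bigr)$ and $(\delta z,\delta\eta)\mapsto\bigl(D(\partial_zG)(\delta z,\delta\eta)+2i\delta z,\;D(\partial_\eta G)(\delta z,\delta\eta)\bigr)$ are isomorphisms; their injectivity uses, besides the transversality of $F$ and $G$, that no nonzero tangent vector to $\Gamma_\Phi$ has the form $(\delta z,2i\delta z)$ (equivalently $\ker d\Phi=0$) and none to $\Gamma_\Psi$ has the form $(\delta z,-2i\delta z)$ (equivalently $\Gamma_\Psi$ is a graph over the source). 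Once these isomorphisms are in hand, a short splitting argument in the variables $(\delta p,\delta q,\delta w,\delta\mu,\delta\eta)$, with $\delta p=\delta u+\delta w$ and $\delta q=\delta v+\delta w$, gives the required surjectivity; without this input your proof is incomplete at exactly the point where the hypothesis ``$\Phi$, $\Psi$ are symplectomorphisms'' must enter.
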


Here $<\cdot\,,\,\cdot>$ denotes the Euclidean scalar product on $\mathbb{C}^n\equiv\mathbb{R}^{2n}$.

\begin{proof}[Proof of Proposition \ref{ex_gf_ne}]
If $\Phi$ is $\mathcal{C}^1$-small then $\Gamma_{\Phi}$ is a Lagrangian section of  $T^{\ast}\mathbb{R}^{2n}$ and hence it has a generating function $F:\mathbb{R}^{2n}\rightarrow\mathbb{R}$. In the general case we consider a Hamiltonian isotopy $\Phi_t$ connecting $\Phi$ to the identity, and we subdivide it in $\mathcal{C}^1$-small pieces. We then apply Lemma \ref{cf} at every step.
\end{proof}

If $\Phi$ is a compactly supported Hamiltonian symplectomorphism of $\mathbb{R}^{2n}$ then it can be shown \cite{LS} that it has a (essentially unique \cite{V}) generating function $F:\mathbb{R}^{2n}\times\mathbb{R}^{2N}\rightarrow\mathbb{R}$ quadratic at infinity. In this paper we will not be interested in compactly supported Hamiltonian symplectomorphisms but in Hamiltonian symplectomorphisms of $\mathbb{R}^{2n}$ that are the time-1 map of Hamiltonian isotopies equivariant with respect to the radial action of $\mathbb{R}_+$. Recall from Section \ref{bg} that these Hamiltonian symplectomorphisms are exactly the lifts of contactomorphisms of $S^{2n-1}$ contact isotopic to the identity. We will now show that all such Hamiltonian symplectomorphisms have a generating function $F:\mathbb{R}^{2n}\times\mathbb{R}^{2N}\rightarrow\mathbb{R}$ which is homogeneous of degree 2. Recall that this means that $F(\lambda q;\lambda \mu)=\lambda^2F(q;\mu)$ for every $(q;\mu)\in\mathbb{R}^{2n}\times\mathbb{R}^{2N}$ and $\lambda\in\mathbb{R}_+$.

\begin{prop}\label{ex_gf}
Every Hamiltonian symplectomorphism $\Phi$ of $\mathbb{R}^{2n}$ which is the time-1 map of a $\mathbb{R}_+$-equivariant Hamiltonian isotopy has a generating function $F:\mathbb{R}^{2n}\times\mathbb{R}^{2N}\rightarrow\mathbb{R}$ which is homogeneous of degree $2$.
\end{prop}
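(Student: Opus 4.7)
The plan is to imitate the proof of Proposition~\ref{ex_gf_ne} in an $\mathbb{R}_+$-equivariant fashion: subdivide an $\mathbb{R}_+$-equivariant Hamiltonian isotopy $\Phi_t$ from the identity to $\Phi$ into $\mathcal{C}^1$-small pieces, produce a homogeneous generating function for each piece, and assemble them with the composition formula of Lemma~\ref{cf}. This reduces the proposition to two verifications: the base case yields a homogeneous generating function, and the composition formula preserves homogeneity of degree~$2$.

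First, suppose $\Phi$ is $\mathbb{R}_+$-equivariant and $\mathcal{C}^1$-close to the identity, so that $\Gamma_\Phi=\{\bigl(\tfrac{z+\Phi(z)}{2},\,i(z-\Phi(z))\bigr)\,|\,z\in\mathbb{R}^{2n}\}$ is a section, say $\Gamma_\Phi=\{(q,dF(q))\}$. The map $\tau$ intertwines the radial scaling on $\overline{\mathbb{R}^{2n}}\times\mathbb{R}^{2n}$ with the scaling $(q,p)\mapsto(\lambda q,\lambda p)$ on $T^{\ast}\mathbb{R}^{2n}$, so the $\mathbb{R}_+$-equivariance of $\Phi$ implies that $\Gamma_\Phi$ is invariant under the latter, which forces $dF(\lambda q)=\lambda\,dF(q)$ for all $\lambda>0$. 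Hence $d\bigl[F(\lambda q)-\lambda^{2}F(q)\bigr]\equiv 0$ in $q$, so this difference is a function of $\lambda$ alone, and normalizing $F(0)=0$ yields $F(\lambda q)=\lambda^{2}F(q)$. Second, homogeneity of degree~$2$ is preserved by Lemma~\ref{cf}: in $F\sharp G(u;v,w,\mu,\eta)=F(u+w;\mu)+G(v+w;\eta)+2\langle u-v,iw\rangle$, scaling every argument by $\lambda$ scales the first two terms by $\lambda^{2}$ by hypothesis and the third by $\lambda^{2}$ by bilinearity.

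To subdivide $\Phi_t$, note that differentiating $\Phi_t(\lambda z)=\lambda\Phi_t(z)$ in $z$ gives $d\Phi_t(\lambda z)=d\Phi_t(z)$, so the $\mathcal{C}^1$-norm of $\Phi_t\circ\Phi_s^{-1}-\mathrm{id}$ on $\mathbb{R}^{2n}\setminus\{0\}$ is controlled by its values on the compact sphere $S^{2n-1}$. Choosing a sufficiently fine partition $0=t_0<\cdots<t_k=1$, each piece $\Phi_{t_j}\circ\Phi_{t_{j-1}}^{-1}$ is $\mathcal{C}^1$-small, and is still $\mathbb{R}_+$-equivariant because compositions and inverses of such maps are $\mathbb{R}_+$-equivariant. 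The first verification supplies a homogeneous generating function for each piece, and iterating Lemma~\ref{cf} together with the second verification produces a generating function $F:\mathbb{R}^{2n}\times\mathbb{R}^{2N}\to\mathbb{R}$ for $\Phi$ which is homogeneous of degree~$2$. The hardest step is the base case, where one must match the $\mathbb{R}_+$-equivariance of $\Phi$ with the homogeneity of $F$ through the formula for $\tau$ and track the additive constant correctly; the subdivision and composition parts are then formal consequences.
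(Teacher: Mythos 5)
Your proposal is correct and follows essentially the same route as the paper: reduce to the $\mathcal{C}^1$-small case by subdividing the isotopy and invoking the composition formula of Lemma~\ref{cf} (checking it preserves homogeneity), then use the fact that $\tau$ intertwines the radial scalings so that $\Gamma_\Phi$ is scale-invariant, together with the normalization $F(0)=0$. The only cosmetic difference is in the base case, where you integrate the identity $dF(\lambda q)=\lambda\,dF(q)$ while the paper deduces $F_\lambda=F$ from uniqueness of the normalized generating function of a Lagrangian section; these are the same argument in different clothing.
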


\begin{proof}
Note that the composition formula of Lemma \ref{cf} preserves the property of being homogeneous of degree $2$: if $F$ and $G$ are homogeneous of degree $2$ then so is $F\sharp G$. Therefore it is enough to prove that if $\Phi$ is a $\mathcal{C}^1$-small $\mathbb{R}_+$-equivariant Hamiltonian symplectomorphism of $\mathbb{R}^{2n}$ then its generating function $F:\mathbb{R}^{2n}\rightarrow\mathbb{R}$ is homogeneous of degree $2$. Recall that $F: \mathbb{R}^{2n}\rightarrow\mathbb{R}$ being a generating function for $\Phi$ means that $\Gamma_{\Phi}\subset T^{\ast}\mathbb{R}^{2n}$ is the graph of $dF$. Note that $F$ is uniquely defined once we normalize it by requiring $F(0)=0$. We will now show that $F$ is homogeneous of degree $2$, i.e. $F(\lambda x, \lambda y)=\lambda^2F(x,y)$. For a fixed $\lambda\in\mathbb{R}_+$ consider the function $F_{\lambda}: \mathbb{R}^{2n}\rightarrow\mathbb{R}$, $F_{\lambda}(x,y)=\lambda^{-2}F(\lambda x, \lambda y)$. This function generates the image of $i_{F_{\lambda}}: \mathbb{R}^{2n} \rightarrow T^{\ast}\mathbb{R}^{2n}$,
$$i_{F_{\lambda}}(x,y) = (x,y,\frac{\partial F_{\lambda}}{\partial x},\frac{\partial F_{\lambda}}{\partial y} )=\big(x,y,\lambda^{-1}\frac{\partial F}{\partial x}(\lambda x, \lambda y), \lambda^{-1}\frac{\partial F}{\partial y}(\lambda x,\lambda y)\big)
$$
i.e. the subspace $\{\big(\lambda^{-1}x, \lambda^{-1}y, \lambda^{-1}\frac{\partial F}{\partial x}(x,y), \lambda^{-1}\frac{\partial F}{\partial y}(x,y)\big)\;|\;(x,y)\in\mathbb{R}^{2n}\}$. We denote by $\tau_{\lambda}: \mathbb{R}^{2n}\rightarrow\mathbb{R}^{2n}$ the map defined by $\tau_{\lambda}(x,y)=(\lambda x,\lambda y)$ and use the same notation also for the induced map on  $T^{\ast}\mathbb{R}^{2n}$. Then the Lagrangian submanifold of $T^{\ast}\mathbb{R}^{2n}$ generated by $F_{\lambda}$ is $\tau_{\lambda^{-1}}(\Gamma_{\Phi})$. But this coincides with $\Gamma_{\Phi}$ because $\Phi$ is $\mathbb{R_{+}}$-equivariant. Thus $F_{\lambda}$ generates $\Gamma_{\Phi}$, and since $F_{\lambda}(0)=F(0)=0$ we get that $F_{\lambda}=F$, i.e. $F$ is homogeneous of degree $2$.
\end{proof}

\begin{rmk}
By the way it is constructed and in view of Remark \ref{diff}, the generating function $F$ is not $\mathcal{C}^{\infty}$ everywhere: it is $\mathcal{C}^1$ with Lipschitz differential everywhere, and $\mathcal{C}^{\infty}$ outside the union of a finite number of hyperplanes intersecting the set $\Sigma_F$ of fiber critical points only at $0$. In particular, $F$ is $\mathcal{C}^{\infty}$ near its non-zero critical points.
\end{rmk}

Let $\phi$ be a contactomorphism of $S^{2n-1}$, contact isotopic to the identity, and consider a homogeneous generating function\footnote{This generating function is not unique, but it depends on the contact isotopy $\phi_t$ connecting $\phi$ to the identity. On the other hand, for a fixed contact isotopy $\phi_t$ Th\'{e}ret \cite{Th2} proved that the construction of Propositions \ref{ex_gf_ne} and \ref{ex_gf} does not depend, up to stabilization and fiber-preserving diffeomorphism, on the particular choice of subdivision of $\phi_t$ into $\mathcal{C}^1$-small pieces.} $F: \mathbb{R}^{2n}\times\mathbb{R}^{2N} \rightarrow \mathbb{R}$ for its lift $\Phi$ to $\mathbb{R}^{2n}$. Note that the property of being homogeneous of degree $2$ implies that all critical points of $F$ have critical value $0$. They come in $\mathbb{R}_+$-lines which correspond to $\mathbb{R}_+$-lines of fixed points of $\Phi$ and hence to discriminant points of $\phi$. Since $F: \mathbb{R}^{2n}\times\mathbb{R}^{2N} \rightarrow \mathbb{R}$ is homogeneous of degree $2$ it is determined by its restriction $f$ to the unit sphere $S^{2n+2N-1}$. $\mathbb{R}_+$-lines of critical points of $F$ correspond to critical points of $f$ with critical value $0$. From this it follows that discriminant points of $\phi$ correspond\footnote{It would be tempting to believe that critical points of $f$ correspond to translated points of $\phi$ (with critical value given by the amount of translation). However this cannot be true, for the following reason. We will see in Section \ref{RP} that if $\phi: S^{2n-1} \rightarrow S^{2n-1}$ lifts a contactomorphism $\overline{\phi}$ of $\mathbb{R}P^{2n-1}$ then its lift $\Phi$ to $\mathbb{R}^{2n}$ has a generating function $F: \mathbb{R}^{2n}\times\mathbb{R}^{2N} \rightarrow \mathbb{R}$ which is homogeneous of degree $2$ and invariant by the diagonal $\mathbb{Z}_2$-action, and hence induces a function $\overline{f}$ on $\mathbb{R}P^{2n+2N-1}$. Our $f:S^{2n+2N-1}\rightarrow\mathbb{R}$ is the lift of this function to $S^{2n+2N-1}$. If the critical points of $f$ where in 1-1 correspondence with the translated points of $\phi$ then we would also have a 1-1 correspondence between the critical points of $\overline{f}$ and the translated points of $\overline{\phi}$. Hence $\overline{\phi}$ would have at least $2n+2N$ translated points. If we now stabilize $F$ by taking the direct sum with a non-degenerate quadratic form on some extra $2N'$ coordinates then we obtain a new generating function $F': \mathbb{R}^{2n}\times\mathbb{R}^{2N+2N'} \rightarrow \mathbb{R}$ for $\Phi$. Note that the critical points of $F$ are in 1-1 correspondence with the critical points of $F'$. However the induced $\overline{f'}$ is a function on $\mathbb{R}P^{2n+2N+2N'-1}$, and hence it has at least $2n+2N+2N'$ critical points. Since $N'$ can be arbitrarily big, it would follow that $\overline{\phi}$ has infinitely many translated points. But as we saw in the introduction there are contactomorphisms of $\mathbb{R}P^{2n-1}$ with exactly $2n$ translated points.} to critical points of $f$ with critical value $0$.

\begin{lemma}
Non-degenerate critical points of $f$ with critical value $0$ correspond to non-degenerate discriminant points of $\phi$.
\end{lemma}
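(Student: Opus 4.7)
My plan is to trace the non-degeneracy condition through each of the correspondences (critical point of $f$ $\leadsto$ critical point of $F$ $\leadsto$ fixed point of $\Phi$ $\leadsto$ discriminant point of $\phi$) by comparing kernels of Hessians with kernels of $d\Phi - \mathrm{Id}$.

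The first step is to relate the Hessian of $f$ at a critical point $x_0\in S^{2n+2N-1}$ with critical value $0$ to the Hessian of $F$ at $x_0\in\mathbb{R}^{2n+2N}$. Since $f = F|_{S^{2n+2N-1}}$, at a critical point the intrinsic Hessian of $f$ on $T_{x_0}S^{2n+2N-1}$ equals the restriction of $\mathrm{Hess}(F)(x_0)$ to $T_{x_0}S^{2n+2N-1}$ corrected by a term involving the second fundamental form of the sphere and $dF(\nu)=2F(x_0)$ along the outward normal $\nu=x_0$; because $f(x_0)=0$, this correction vanishes, so the two Hessians agree on $T_{x_0}S^{2n+2N-1}$. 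Next, the homogeneity of $F$ gives the Euler relation $dF(x)\cdot x = 2F(x)$; differentiating yields $\mathrm{Hess}(F)(x_0)\,(x_0, Y) = dF(Y) - dF(Y) = 0$ for all $Y$, using $dF(x_0)=0$. Thus $x_0$ lies in $\ker \mathrm{Hess}(F)(x_0)$, and we obtain
\[
\ker\mathrm{Hess}(F)(x_0) = \mathbb{R}\,x_0 \,\oplus\, \ker\mathrm{Hess}(f)(x_0).
\]
Consequently, $\mathrm{Hess}(f)(x_0)$ is non-degenerate on $T_{x_0}S^{2n+2N-1}$ if and only if $\ker\mathrm{Hess}(F)(x_0)$ is exactly the one-dimensional line $\mathbb{R}\,x_0$.

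The second step is the standard generating-function identity: if $F:\mathbb{R}^{2n}\times\mathbb{R}^{2N}\to\mathbb{R}$ generates $\Gamma_{\Phi}\subset T^{\ast}\mathbb{R}^{2n}$ and $x_0=(z_0,\mu_0)$ is a fiber-critical point of $F$ with $dF(x_0)=0$ (corresponding to a fixed point $z_0$ of $\Phi$), then the map $i_F$ identifies
\[
\ker\mathrm{Hess}(F)(x_0)\ \cong\ \ker\bigl(d\Phi_{z_0}-\mathrm{Id}\bigr),
\]
since both spaces parametrize tangent vectors to $\Gamma_{\Phi}\cap(\text{0-section})$ at the corresponding point, and the diagonal in $\overline{\mathbb{R}^{2n}}\times\mathbb{R}^{2n}$ is sent to the 0-section in $T^{\ast}\mathbb{R}^{2n}$ by $\tau$. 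Under this isomorphism the radial line $\mathbb{R}\,x_0\subset\ker\mathrm{Hess}(F)(x_0)$ is sent precisely to the radial line $\mathbb{R}\,z_0\subset\ker(d\Phi_{z_0}-\mathrm{Id})$: indeed, $\mathbb{R}_+$-equivariance of $\Phi$, upon differentiating $\Phi(\lambda z)=\lambda\Phi(z)$ at $\lambda=1$, gives $d\Phi_{z_0}(z_0)=\Phi(z_0)=z_0$.

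The final step is to translate non-degeneracy of the fixed point $z_0$ of $\Phi$ modulo the radial direction into non-degeneracy of the discriminant point $q=z_0/|z_0|$ of $\phi$. Using the identification $\mathbb{R}^{2n}\smallsetminus\{0\}\cong S^{2n-1}\times\mathbb{R}$ of the punctured plane with the symplectization $SS^{2n-1}$, the lift $\Phi$ becomes $(q,\theta)\mapsto(\phi(q),\theta-g(q))$, whose differential acts by $(X,v)\mapsto(\phi_{\ast}X,v-dg(X))$. A vector $(X,v)$ is fixed iff $\phi_{\ast}X=X$ and $X(g)=0$, which is exactly the condition used to define non-degeneracy of $q$. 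Under the identification, the radial direction in $\mathbb{R}^{2n}$ corresponds to $\partial_\theta$, so $\ker(d\Phi_{z_0}-\mathrm{Id})=\mathbb{R}\,z_0$ iff $q$ is a non-degenerate discriminant point of $\phi$. Chaining the equivalences gives the lemma. The step I expect to require most care is the identification $\ker\mathrm{Hess}(F)(x_0)\cong\ker(d\Phi_{z_0}-\mathrm{Id})$, since even though it is standard in generating-function theory, one must verify it respects the radial direction and is compatible with the map $\tau$ used to define $\Gamma_{\Phi}$.
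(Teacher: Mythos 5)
Your proof is correct, and its logical skeleton is the same as the paper's: trace non-degeneracy through the chain (critical point of $f$) $\Leftrightarrow$ ($\ker\mathrm{Hess}\,F$ equals the radial line) $\Leftrightarrow$ ($\ker(d\Phi_{z_0}-\mathrm{Id})$ equals the radial line) $\Leftrightarrow$ (non-degenerate discriminant point of $\phi$). What differs is the technical engine. The paper runs the whole argument through symplectic reduction, following Proposition 2.1 of \cite{C}: the graph of $dF$ and the $0$-section in $T^{\ast}(\mathbb{R}^{2n}\times\mathbb{R}^{2N})$ reduce along the coisotropic $N_E$ to $\Gamma_{\Phi}$ and the $0$-section in $T^{\ast}\mathbb{R}^{2n}$, and non-degeneracy is phrased, both upstairs and downstairs, as the condition that the intersection of tangent spaces of the two Lagrangians is exactly the tangent line to the $\mathbb{R}_+$-orbit. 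Your middle step --- the identification $\ker\mathrm{Hess}(F)(x_0)\cong\ker(d\Phi_{z_0}-\mathrm{Id})$, which you rightly flag as the delicate point but only cite as standard --- is precisely what the paper's reduction argument establishes, so there you are lighter than the paper (you invoke the fact where the paper exhibits the mechanism; note that its proof does use the transversality of $dF$ to $N_E$, which is what makes the base projection injective on the kernel). In compensation, your first and third steps prove explicitly what the paper only asserts: the Euler-relation computation giving $\ker\mathrm{Hess}(F)(x_0)=\mathbb{R}\,x_0\oplus\ker\mathrm{Hess}(f)(x_0)$ (the paper simply states that non-degeneracy of $x_0$ for $f$ means $T_{x_0}L_1\cap T_{x_0}L_2$ is the radial line), and the symplectization computation $d\Phi(X,v)=(\phi_{\ast}X,\,v-dg(X))$ identifying the residual kernel direction $\partial_\theta$ with the radial direction (the paper simply states that the reduced condition is equivalent to non-degeneracy of the discriminant point). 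So the two routes buy different things: reduction gives a uniform, fibration-independent explanation of the kernel correspondence, while your computation is more elementary and makes the two bookend equivalences airtight. One shared implicit assumption: the base component $z_0$ of the critical point is non-zero, so that the discriminant point $z_0/|z_0|$ is defined.
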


\begin{proof}
We will adapt to our situation the proof of Proposition 2.1 of \cite{C}. We first recall the notion of symplectic reduction, and how it can be use to interpret the construction of generating functions. Let $(W,\omega)$ be a symplectic manifold. A submanifold $Q$ of $W$ is called coisotropic (respectively isotropic) if at every point $q$ of $Q$ the symplectic orthogonal in $T_qW$ of $T_qQ$ is contained in (respectively contains) $T_qQ$. Given a coisotropic submanifold $Q$ of $(W,\omega)$, the distribution $\text{ker}(\omega|_{Q})$ is integrable and gives rise to a foliation of $Q$ by isotropic leaves. The quotient $W'$, if smooth, has a symplectic form $\omega'$ induced by $\omega$. The symplectic manifold $(W',\omega')$ is called the \textit{symplectic reduction} of $Q$. If $L$ is a Lagrangian submanifold of $W$ such that $L\cap Q$ is a submanifold and $T_q(L\cap Q)=T_qL \cap T_qQ$ for every $q\in L\cap Q$, then the quotient $L^{\text{red}}$ of $L\cap Q$ is a Lagrangian submanifold of $(W',\omega')$. We now apply this construction to our situation. Recall that we have a generating function $F:E\rightarrow \mathbb{R}$ defined on the total space of a fiber bundle $\pi: E\rightarrow B$. We consider the coisotropic submanifold $N_E$ of $T^{\ast}E$. Recall that $N_E$ is the fiber normal bundle of $\pi$. The isotropic leaves are the fibers of $E$, and thus its symplectic reduction is $T^{\ast}B$. The graph of $dF$ is a Lagrangian submanifold of $T^{\ast}E$, whose reduction is the Lagrangian submanifold of $T^{\ast}B$ generated by $F$. In our situation $F$ is defined on the total space of the fibration $\pi: \mathbb{R}^{2n}\times\mathbb{R}^{2N}\rightarrow\mathbb{R}^{2n}$, and it is a generating function for the Lagrangian submanifold $\Gamma_{\Phi}$ of $T^{\ast}\mathbb{R}^{2n}$ corresponding to the graph of $\Phi$. We denote by $L_1$ and $L_2$ respectively the graph of $dF$ and the 0-section in $T^{\ast}(\mathbb{R}^{2n}\times\mathbb{R}^{2N})$. Then the corresponding reduced Lagrangians $L_1^{\phantom{1}\text{red}}$ and $L_2^{\phantom{2}\text{red}}$ of $T^{\ast}\mathbb{R}^{2n}$ are respectively $\Gamma_{\Phi}$ and the 0-section. Since $\Phi$ is equivariant with respect to the $\mathbb{R}_{+}$-action on $\mathbb{R}^{2n}$, we have that $\Gamma_{\Phi}$ is equivariant with respect to the induced $\mathbb{R}_{+}$-action on $T^{\ast}\mathbb{R}^{2n}$. Consider a point $(q;\mu)\in\mathbb{R}^{2n}\times\mathbb{R}^{2N}$ lying on the unit sphere, and assume that it is a critical point of $F$ (equivalently, a critical point of $f$ with critical value $0$). Hence, $(q;\mu)$ belongs to $L_1\cap L_2$. Since $F$ is homogeneous of degree $2$, the whole line $l_{(q;\mu)}=\{\,(\lambda q; \lambda\mu)\,|\,\lambda\in\mathbb{R}_{+}\,\}$ is made of critical points of $F$ and hence belongs to $L_1\cap L_2$. It corresponds in the reduced space $\mathbb{R}^{2n}$ to a $\mathbb{R}_{+}$-line $l_{q}=\{\,\lambda q\,|\,\lambda\in\mathbb{R}_{+}\,\}$ in $L_1^{\phantom{1}\text{red}}\cap L_2^{\phantom{2}\text{red}}$. The point $(q;\mu)$ is a non-degenerate critical point of $f$ if and only if $T_{(q;\mu)}L_1\cap T_{(q;\mu)}L_2=T_{(q;\mu)}l_{(q;\mu)}$. But this condition is equivalent to the analogue condition in the reduced space, i.e. $T_q L_1^{\phantom{1}\text{red}}\cap T_q L_2^{\phantom{2}\text{red}}=T_q L_q$ which is satisfied if and only if $q$ is a non-degenerate discriminant point of $\phi$.
\end{proof}

We will now show monotonicity of generating functions. Recall that a contact isotopy $\phi_t$ of $S^{2n-1}$ is said to be \textit{positive} if it moves every point in a direction which is positively transverse to the contact distribution (or equivalently if it is generated by a positive contact Hamiltonian). This notion was introduced by Eliashberg and Polterovich in \cite{EP}.

\begin{lemma}\label{monotonicity}
If $\phi_t$, $t\in[0,1]$, is a positive contact isotopy of $S^{2n-1}$ then there is a 1-parameter family $F_t: \mathbb{R}^{2n}\times\mathbb{R}^{2N}\rightarrow\mathbb{R}$ of homogeneous generating functions for the lifted Hamiltonian isotopy $\Phi_t$  of $\mathbb{R}^{2n}$ such that $\frac{\partial F_t}{\partial t}>0$.
\end{lemma}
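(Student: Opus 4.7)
The plan is to reduce to the $\mathcal{C}^1$-small case via the composition formula of Lemma \ref{cf}, and then to relate the time derivative of a $\mathcal{C}^1$-small generating function directly to the generating Hamiltonian. Since $\phi_t$ is positive, its contact Hamiltonian $h_t$ is strictly positive, and hence the lifted Hamiltonian $H_t(z)=|z|^2 h_t(z/|z|)$ of $\Phi_t$ is homogeneous of degree~$2$ and strictly positive on $\mathbb{R}^{2n}\setminus\{0\}$; this positivity is what must ultimately force $\partial_t F_t>0$.

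First I would subdivide $[0,1]$ into subintervals $[t_{j-1},t_j]$ on which each incremental piece $\Phi_{j,t}:=\Phi_t\circ\Phi_{t_{j-1}}^{-1}$ of the flow (starting at the identity at $t=t_{j-1}$) is $\mathcal{C}^1$-small; by $\mathbb{R}_+$-equivariance, checking $\mathcal{C}^1$-smallness on the unit sphere is enough. On each such piece, the proofs of Propositions \ref{ex_gf_ne} and \ref{ex_gf} give a smooth, homogeneous-of-degree-$2$ family $F_{j,t}:\mathbb{R}^{2n}\to\mathbb{R}$ of generating functions (with no auxiliary fiber variables). Iteratively applying Lemma \ref{cf} assembles these into a family $F_t:\mathbb{R}^{2n}\times\mathbb{R}^{2N}\to\mathbb{R}$ of homogeneous generating functions for $\Phi_t$, and because the cross term $2\langle u-v,iw\rangle$ in the composition formula is independent of $t$,
\[
\frac{\partial}{\partial t}(F_t\sharp G_t)(u;v,w,\mu,\eta)=\frac{\partial F_t}{\partial t}(u+w;\mu)+\frac{\partial G_t}{\partial t}(v+w;\eta).
\]
Thus positivity of $\partial_t$ propagates through composition, and it suffices to verify $\partial_t F_{j,t}>0$ for a single $\mathcal{C}^1$-small piece.

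The heart of the argument is a direct calculation that yields a formula of the form $\partial_t F_{j,t}(m)=H_t(\zeta_t(m))$, where $\zeta_t(m)$ is a point on the Hamiltonian chord whose midpoint is $m$ (either the initial point $(x,y)$, the terminal point $(X,Y)=\Phi_{j,t}(x,y)$, or the midpoint itself --- any such identification is enough for our purposes). One derives this by differentiating $F_{j,t}(m(t))$ along a trajectory of $\Phi_{j,t}$ with fixed input $(x,y)$, using the defining relations $\partial_{x_0}F_{j,t}(m)=Y-y$ and $\partial_{y_0}F_{j,t}(m)=x-X$ together with Hamilton's equations $\partial_t(X,Y)=X_{H_t}(X,Y)$; the chain-rule contributions combine with the symplectic pairing to leave only $H_t$ evaluated at $\zeta_t(m)$. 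In the $\mathcal{C}^1$-small regime the point $\zeta_t(m)$ is close to $m$, hence nonzero when $m\neq 0$, so strict positivity of $H_t$ on $\mathbb{R}^{2n}\setminus\{0\}$ yields $\partial_t F_{j,t}(m)>0$ wherever it is defined.

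The step I expect to be hardest is the bookkeeping in this local calculation, in particular pinning down the correct point $\zeta_t(m)$ without sign errors and then checking that the resulting positivity is preserved through the composition once one accounts for the regularity issue flagged after Proposition \ref{ex_gf} (namely that $F_t$ is only $\mathcal{C}^1$ along a finite union of hyperplanes). Strict positivity on the dense smooth locus, together with continuity of $\partial_t F_t$ where it extends, will be enough for the Morse-theoretic applications in Sections \ref{S} and \ref{RP}.
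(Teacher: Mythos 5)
Your proposal is correct and follows essentially the same route as the paper: reduce to the $\mathcal{C}^1$-small case by subdividing the isotopy and noting that the composition formula of Lemma \ref{cf} (whose cross term is $t$-independent) propagates positivity of $\partial_t$, then identify $\partial_t F_t$ with the lifted Hamiltonian evaluated at a point of the corresponding chord. The only difference is cosmetic: the paper obtains the key identity $\partial_t F_t(q)=H_{\Psi_{\Phi_t}}\bigl(q,\partial F_t/\partial q\bigr)$ by citing the Hamilton--Jacobi equation (with the relevant point being the terminal point $\Phi_t(z)$ of the chord, via $\tau^{-1}$), whereas you rederive that identity by direct differentiation.
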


\begin{proof}
We first consider the case when $\Phi_t$ is $\mathcal{C}^1$-small for every $t \in [0,1]$ and thus has a homogeneous generating function $F_t: \mathbb{R}^{2n}\rightarrow\mathbb{R}$. Let $\Psi_{\Phi_t}$ be the Hamiltonian isotopy of $T^{\ast}\mathbb{R}^{2n}$ that makes the following diagram commute
$$
\xymatrix{
 \quad \overline{\mathbb{R}^{2n}}\times\mathbb{R}^{2n}\quad  \ar[r]^{\text{id}\times\Phi_t} \ar[d]_{\tau} &
 \quad \overline{\mathbb{R}^{2n}}\times\mathbb{R}^{2n} \quad \ar[d]^{\tau} \\
 \quad T^{\ast}\mathbb{R}^{2n} \quad \ar[r]_{\Psi_{\Phi_t}} &  \quad T^{\ast}\mathbb{R}^{2n}.}\quad
$$
Then the image of the 0-section by $\Psi_{\Phi_t}$ is the graph of $dF_t$. By the Hamilton-Jacobi equation \cite[\S 46]{A1} we know thus that
\begin{equation}\label{HJ}
\frac{\partial}{\partial t} F_t(q) = H_{\Psi_{\Phi_t}}\,\big(q,\frac{\partial F_t}{\partial q}\big)
\end{equation}
where $H_{\Psi_{\Phi_t}}: T^{\ast}\mathbb{R}^{2n}\rightarrow\mathbb{R}$ is the Hamiltonian generating $\Psi_{\Phi_t}$, $t \in [0,1]$. Note that $H_{\Psi_{\Phi_t}}=\overline{H_t}\circ \tau^{-1}$ where $\overline{H_t}: \overline{\mathbb{R}^{2n}}\times\mathbb{R}^{2n}\rightarrow\mathbb{R}$ is defined by $\overline{H_t}(z,z')=H_t(z')$. Here $H_t$, $t \in [0,1]$ is the Hamiltonian function of the isotopy $\Phi_t$ of $\mathbb{R}^{2n}$. Since $\phi_t$ is a positive contact isotopy we have thus that $H_{\Psi_{\Phi_t}}>0$, hence by (\ref{HJ}) also $\frac{\partial F_t}{\partial t}>0$. Consider now the case of a general positive contact isotopy $\phi_t$, $t \in [0,1]$. We take a subdivision $0=t_0<t_1<\cdots<t_k=1$ of $[0,1]$ into sufficiently many pieces, so that each $\phi_{t_j}\circ\phi_{t_{j-1}}^{\phantom{t_{j-1}}-1}$ is $\mathcal{C}^1$-small. Because of Lemma \ref{cf} the result then follows by induction from the $\mathcal{C}^1$-small case.
\end{proof}

\begin{rmk}\label{rmk_mon}
The previous lemma does not imply that if $\phi_t$ is a positive contact isotopy then there is a homogeneous generating function for the lift of $\phi_1$ which is positive. All we know is that there is a 1-parameter family $F_t: \mathbb{R}^{2n}\times\mathbb{R}^{2N}\rightarrow\mathbb{R}$ of homogeneous generating functions for the lifted $\Phi_t$ with $\frac{\partial F_t}{\partial t}>0$, and so in particular with $F_1>F_0$. $F_0$ is a generating function for the identity, but it is not necessarily the constant function $F_0=0$: it is in general a quadratic form on the fiber bundle $\mathbb{R}^{2n}\times\mathbb{R}^{2N}\rightarrow\mathbb{R}^{2n}$.
\end{rmk}

By the discussion above we see that we can detect discriminant points of a contactomorphism $\phi$  of $S^{2n-1}$ by looking at the preimage $f^{-1}(0)$ where $f$ is the restriction to the unit sphere of a generating function $F$ for the lifted $\Phi$: if $f^{-1}(0)$ is singular then there must be a discriminant point of $\phi$. Using this idea we can then also find the translated points of $\phi$, by looking for discriminant points of $a_t\circ\phi$ for $t\in[0,1]$ where as before $a_t$ is the negative Reeb flow of $S^{2n-1}$. We will do this in the next section. We will consider a 1-parameter family $F_t: \mathbb{R}^{2n}\times\mathbb{R}^{2N} \rightarrow \mathbb{R}$ of homogeneous generating functions for $a_t\circ\phi$, and look for values of $t$ for which $f_t^{\phantom{t}-1}(0)$ is singular, where $f_t$ is the restriction of $F_t$ to the unit sphere. A crucial element in the proof is given by the following proposition. Note first that $a_t$, $t\in[0,1]$, has a 1-parameter family of generating quadratic forms. This is true for any path of $\mathbb{C}$-linear symplectomorphisms of $\mathbb{R}^{2n}$, as can be seen by applying the proof of Proposition \ref{ex_gf} and using the observation that if $\phi$ is a $\mathcal{C}^1$-small $\mathbb{C}$-linear symplectomorphism of $\mathbb{R}^{2n}$ the $\Gamma_{\Phi}$ is the graph of the differential of a quadratic form.

\begin{prop}[\cite{Th1}]\label{mas}
If $A_t: \mathbb{R}^{2n}\times\mathbb{R}^{2N}\rightarrow\mathbb{R}$ is a 1-parameter family of generating quadratic forms for $a_t$, $t\in[0,1]$, then $\text{ind}(A_1)-\text{ind}(A_0)=2n$.
\end{prop}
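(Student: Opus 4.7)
The plan is to reduce to $n = 1$ via the diagonal action of $a_t$ on $\mathbb{C}^n$, to build an explicit generating family using the composition formula of Lemma \ref{cf}, and then to compute the index difference through a crossing-form analysis at the two degenerate times $t = 0$ and $t = 1$.

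First I would reduce to the case $n = 1$. The rotation $a_t$ acts diagonally on $\mathbb{C}^n = \mathbb{C}^{\oplus n}$, so any generating quadratic family $A_t^{(1)}$ for the one-dimensional rotation gives, by $n$-fold direct sum, a family $A_t = (A_t^{(1)})^{\oplus n}$ for the rotation on $\mathbb{C}^n$. Indices of quadratic forms are additive under direct sum, and any other generating quadratic family for $a_t$ differs from this one (uniformly in $t$) by stabilization with a fixed non-degenerate quadratic form and a fiber-preserving linear change of coordinates (the uniqueness noted in the footnote after Proposition \ref{ex_gf}), both operations that preserve the Morse index. Thus the difference $\text{ind}(A_1) - \text{ind}(A_0)$ is independent of the chosen family, and it suffices to show that the $n = 1$ case yields exactly $+2$.

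For $n = 1$, I would use the decomposition $a_t = (a_{t/3})^3$. For $t \in [0,1]$, the factor $a_{t/3}$ rotates by an angle in $[-2\pi/3, 0] \subset (-\pi, \pi)$, so its graph is a section of $T^{\ast}\mathbb{R}^2$ and it admits the explicit generating quadratic form $B_t(z) = -\tan(\pi t/3)|z|^2$ on $\mathbb{R}^2$ (verified by a direct computation from the identification $\tau$). The composition formula of Lemma \ref{cf} then produces the continuous family $A_t := B_t \sharp B_t \sharp B_t \colon \mathbb{R}^2 \times \mathbb{R}^8 \to \mathbb{R}$. Expanding, $A_t = s(t)\,Q + B$ where $s(t) = -\tan(\pi t/3)$, $B$ is the symplectic gluing bilinear term, and $Q = |u+w_1+w_2|^2 + |v_1+w_1|^2 + |v_2+w_2|^2$ is positive semi-definite. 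Since $a_t(z) = z$ has only the trivial solution for $t \in (0,1)$, the critical set of $A_t$, which by homogeneity equals $\ker(A_t)$, reduces to $\{0\}$; hence $A_t$ is non-degenerate on $(0,1)$ and $\text{ind}(A_t)$ is constant there, with common value I denote by $k$.

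The technical heart, which will be the main obstacle, is the crossing-form analysis at the two endpoints. Solving $dA_{t_0} = 0$ explicitly for $t_0 \in \{0, 1\}$ identifies $\ker(A_{t_0})$ as a $2$-dimensional subspace (as expected from the $2$-dimensional fixed locus of $a_{t_0} = \text{id}$). The standard perturbation formula gives $\dot\lambda(t_0) = 2\dot s(t_0)\,Q(v)/|v|^2$ for the derivative of a simple null eigenvalue $\lambda$ with eigenvector $v \in \ker(A_{t_0})$. I would verify by direct substitution, using the explicit descriptions of $\ker(A_0)$ and $\ker(A_1)$, that $Q$ is strictly positive on $\ker(A_{t_0}) \setminus \{0\}$; for example on $\ker(A_1)$ the computation yields $Q(v) = \tfrac{1}{3}(|w_1+w_2|^2 + |w_1|^2 + |w_2|^2) > 0$. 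Since $\dot s(t_0) = -(\pi/3)\sec^2(\pi t_0/3) < 0$, both crossing forms are negative definite. Consequently the two null eigenvalues of $A_0$ move strictly into the negative region as $t$ increases past $0$, giving $k = \text{ind}(A_0) + 2$, while the two eigenvalues that become null at $t = 1$ were strictly positive for $t$ slightly less than $1$, giving $\text{ind}(A_1) = k$. Combining yields $\text{ind}(A_1) - \text{ind}(A_0) = 2$, which by the first reduction step gives $2n$ in general.
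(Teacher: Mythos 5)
The paper itself offers no proof of Proposition \ref{mas}: it defers entirely to Th\'eret's thesis \cite{Th1}, adding only the remark that $2n$ is the Maslov index of the lift of the path $t\mapsto a_t$. Your proposal is therefore a genuinely different, self-contained route, and its computational core checks out. For a rotation of $\mathbb{R}^2$ by an angle $\theta\in(-\pi,\pi)$, the identification $\tau$ does produce the generating form $\tan(\theta/2)|q|^2$, so $B_t(z)=-\tan(\pi t/3)|z|^2$ is correct for $a_{t/3}$; the triple composition via Lemma \ref{cf} has exactly the shape $A_t=s(t)Q+B$ with your $Q$; solving $dA_t=0$ directly confirms that $\ker A_t=0$ for $t\in(0,1)$ and that the kernels at $t=0,1$ are two-dimensional with $Q$ strictly positive on them; and since $\dot s<0$ the two crossing forms are negative definite, giving $k=\text{ind}(A_0)+2$ and $\text{ind}(A_1)=k$, hence the difference $2$ for $n=1$. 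The direct-sum reduction from $n$ to $1$ is also sound (products of graphs correspond to direct sums of generating forms, and indices add). One phrasing caveat: the null eigenvalues have multiplicity two, so you should state the perturbation step in terms of the crossing form (the restriction of $\dot A_{t_0}$ to $\ker A_{t_0}$) rather than ``a simple null eigenvalue''; definiteness of that form is what makes the multiplicity harmless, and it is what you in fact compute.

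The one genuine gap is the step asserting that an \emph{arbitrary} family of generating quadratic forms for $a_t$ has the same index difference as your explicit one. The uniqueness you cite --- the footnote after Proposition \ref{ex_gf} --- says only that generating functions produced by the construction of Propositions \ref{ex_gf_ne} and \ref{ex_gf} are independent, up to stabilization and fiber-preserving diffeomorphism, of the chosen subdivision into $\mathcal{C}^1$-small pieces. An arbitrary quadratic generating family for $a_t$ need not arise from that construction, so this statement does not cover it; what is needed is Th\'eret's uniqueness theorem for quadratic generating forms of linear symplectomorphisms (which is indeed in \cite{Th1}, the reference the proposition credits), applied continuously in $t$ so that the stabilizing form has constant index along the family. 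Alternatively, you can bypass uniqueness: any generating family for $a_t$ is nondegenerate for $t\in(0,1)$ (its kernel is canonically isomorphic to $\text{Fix}(a_t)=0$), so index jumps can occur only at $t=0,1$, and at such a crossing the kernel of the Hessian is canonically isomorphic to $\text{Fix}(a_{t_0})$ with crossing form depending only on the path of Lagrangians $\Gamma_{a_t}$, not on the generating family; proving that intrinsic-ness statement would make the index difference manifestly family-independent, and it is exactly the Maslov-index mechanism the paper's one-line remark alludes to. Finally, a small misstatement to fix: stabilization by a fixed nondegenerate quadratic form does not ``preserve the Morse index'' --- it shifts it by the index of the added form; what it preserves is the difference $\text{ind}(A_1)-\text{ind}(A_0)$, which is all you need.
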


We refer to \cite{Th1} for a proof of this proposition. We just notice that $2n$ is the Maslov index of the lift to $\mathbb{R}^{2n}$ of the path $t \mapsto a_t$, $t \in [0,1]$.

\section{Translated points for contactomorphisms of $S^{2n-1}$}\label{S}

Let $\phi$ be a contactomorphism of $S^{2n-1}$, contact isotopic to the identity. We will assume that all translated points of $\phi$ are non-degenerate. Under this assumption we will now prove that $\phi$ has at least 2 translated points.\\
\\
Let $a_t: S^{2n-1}\rightarrow S^{2n-1}$, $t\in[0,1]$, be the negative Reeb flow $a_t(z)=e^{-2\pi it}z$. The translated points of $\phi$ correspond to the union of the discriminant points of $a_t\circ\phi$ for $t$ varying in the interval $[0,1]$. By assumption, all discriminant points of $a_t\circ\phi$ are non-degenerate.\\
\\
As we know from the previous section, $\phi$ has a homogeneous generating function $F: \mathbb{R}^{2n}\times\mathbb{R}^{2N}\rightarrow\mathbb{R}$, and the contact isotopy $a_t$, $t\in[0,1]$, has a 1-parameter family of generating quadratic forms $A_t: \mathbb{R}^{2n}\times\mathbb{R}^{2N'}\rightarrow\mathbb{R}$. A 1-parameter family of homogeneous generating functions $F_t=A_t \sharp F: \mathbb{R}^{2n}\times (\mathbb{R}^{2n}\times\mathbb{R}^{2n}\times\mathbb{R}^{2N}\times\mathbb{R}^{2N'})\rightarrow\mathbb{R}$ for $a_t\circ\phi$, $t\in[0,1]$, is then obtained by applying the composition formula of Lemma \ref{cf}.\\
\\
We will denote by $f_t$, $t\in[0,1]$, the restriction of $F_t$ to the unit sphere $S^{2n+2M-1}$ of $\mathbb{R}^{2n+2M}$, where we set $M=2n+N+N'$. Then, as we already discussed, (non-degenerate) critical points of $f_t$ with critical value $0$ correspond to (non-degenerate) discriminant points of $a_t\circ\phi$, hence to (non-degenerate) translated points of $\phi$ that are translated by $t$. We need thus to detect values of $t$ for which $f_t^{\phantom{t}-1}(0)$ is singular. To do this, we will look at changes in the topology of the sublevel sets $N_t:=\{f_t\leq0\}\subset S^{2n+2M-1}$. \\
\\
Since $a_t$, $t\in[0,1]$, is a negative contact isotopy, By Lemma \ref{monotonicity} we have that $\frac{\partial F_t}{\partial t}<0$ and so the family of subsets $N_t$ of $S^{2n+2M-1}$ is increasing: $N_t\subset N_{t'}$ if $t<t'$. Consider the submanifold\footnote{It was proved by Th\'{e}ret \cite{Th2} that $0$ is a regular value of $f$ so that $V$ is indeed a smooth submanifold.} $V:=f^{-1}(0)$ of $[0,1]\times S^{2n+2M-1}$, where $f:[0,1]\times S^{2n+2M-1}\rightarrow\mathbb{R}$ denotes the total function $f(t,x)=f_t(x)$. Let $\pi:V\rightarrow[0,1]$ be the projection on the first factor. Then (non-degenerate) critical points of $f_t$ with critical value $0$ correspond to (non-degenerate) critical points of $\pi$ with critical value $t$. It follows from this that if for $t_0$, $t_1\in[0,1]$ the corresponding sublevel sets $N_{t_0}$, $N_{t_1}$ have different homotopy type then there must be a value of $t$ in $[t_0,t_1]$ for which $f_t^{\phantom{t}-1}(0)$ is singular, hence there must be a translated point of $\phi$ which is translated by $t$. If instead we cross a (non-degenerate) critical point $x$ of index $k$ then $N_t$ changes by attaching a $k$-cell.\\
\\
The idea of the proof is now to compare the homotopy types of $N_0$ and $N_1$, and to show that the change in the topology from $N_0$ to $N_1$ can only be obtained by crossing at least 2 critical points. \\
\\
Note that $A_0$ and $A_1$ are quadratic generating forms of the identity. Th\'{e}ret \cite{Th1} proved that in such a situation we can find smooth paths $\Psi_{s}^{\phantom{s}0}$, $\Psi_{s}^{\phantom{s}1}$, $s\in[0,1]$, of diffeomorphisms of $\mathbb{R}^{2n}\times\mathbb{R}^{2M}$ (that we can assume to be equivariant with respect to the $\mathbb{R}_+$-action) such that $\Psi_{0}^{\phantom{0}0}=\Psi_{0}^{\phantom{0}1}=\text{id}$, and $A_0\circ\Psi_{1}^{\phantom{1}0}=\overline{A_0}$, $A_1\circ\Psi_{1}^{\phantom{1}1}=\overline{A_1}$ for non-degenerate quadratic forms $\overline{A_0}$, $\overline{A_1}:\mathbb{R}^{2M}\rightarrow\mathbb{R}$. Then $(A_0\circ\Psi_{s}^{\phantom{s}0})\sharp F$ is a family of generating functions for $\phi$. By Lemma 4.8 in \cite{Th2} these generating functions are all equivalent. Thus $N_0$ has the same homotopy type of $\{\overline{A_0}\sharp F\leq0\}$. Similarly, $N_1$ has the same homotopy type of $\{\overline{A_1}\sharp F\leq0\}$.\\
\\
If the function $F$ happens to be positive then the rest of the proof is particularly simple. In this case we have indeed that $N_0\simeq \{\overline{A_0}\sharp F\leq0\}\simeq\{\overline{A_0}\leq0\}\simeq S^{i(\overline{A_0})-1}$ and similarly $N_1\simeq S^{i(\overline{A_1})-1}$. Since, by Proposition \ref{mas}, $i(\overline{A_1})-i(\overline{A_0})=2n$ and since all critical points are assumed to be non-degenerate, elementary arguments of Morse theory allows us to conclude that either there are two different values of $t$ for which $f_t^{\phantom{t}-1}(0)$ is singular or a single value of $t$ for which $f_t^{\phantom{t}-1}(0)$ has a positive dimensional singular set. In either case there are at least $2$ translated points of $\phi$.\\
\\
In general, even though we can always assume without loss of generality that $\phi$ is the time-1 map of a positive contact isotopy (we can just compose any contact isotopy connecting it to the identity with sufficiently many iterations of the Reeb flow), its generating function $F$ is not necessarily positive (see Remark \ref{rmk_mon}).\\
\\
The proof in the general case (i.e. if $F$ is not necessarily positive) can be completed as follows. We have
$$
N_0\simeq \{\overline{A_0}\sharp F\leq0\}\simeq\{\overline{A_0}\leq0\}\ast\{F\leq0\}\simeq S^{i(\overline{A_0})-1}\ast\{F\leq0\}
$$
and similarly $N_1\simeq  S^{i(\overline{A_1})-1}\ast\{F\leq0\}$. Here $\ast$ denotes the join operator. Recall that the join $X\ast Y$ of two topological spaces $X$ and $Y$ is defined to be the quotient of $X\times Y\times[0,1]$ by the identifications $(x,y_1,0)\sim(x,y_2,0)$ and $(x_1,y,1)\sim(x_2,y,1)$. Recall also that $X\ast Y \simeq\Sigma(X\wedge Y)$ where $X\wedge Y$ is the smash product of $X$ and $Y$, i.e. the quotient of the product $X\times Y$ by the wedge $X\vee Y$, and $\Sigma$ the suspension operator. Since the homology of $\Sigma(X\wedge Y)$ is the homology of $X\wedge Y$ shifted by 1, and the homology of $X\wedge Y$ can be computed by the relative K\"{u}nneth formula (see for example \cite[page 276]{Hat}) we can then conclude by the same arguments as before.

\section{Translated points for contactomorphisms of $\mathbb{R}P^{2n-1}$}\label{RP}

We want to prove in this section that every contactomorphism of $\mathbb{R}P^{2n-1}$ which is contact isotopic to the identity has at least $2n$ translated points. The proof is analogous to Th\'{e}ret's proof \cite{Th2} of the existence of $n$ \textit{rotation numbers} for a Hamiltonian isotopy of $\mathbb{C}P^{n-1}$. As we will see, the non-degeneracy condition for translated points is not needed here.\\
\\
We consider the standard contact form on $\mathbb{R}P^{2n-1}$, i.e. the one obtained by quotienting the standard contact form of $S^{2n-1}$ by the antipodal action of $\mathbb{Z}_2$. Let $\phi$ be a contactomorphism of $\mathbb{R}P^{2n-1}$. To study its translated points we will lift it to a ($\mathbb{Z}_2$-equivariant) contactomorphism of $S^{2n-1}$. Note that $\phi$ can be lifted to $S^{2n-1}$ in two different ways. For example if $\phi$ is the identity then its lift is either the identity of $S^{2n-1}$ or the antipodal map. However if $\phi$ is the time-1 map of a contact isotopy $\phi_t$ then we can uniquely define its lift $\widetilde{\phi}$ to be the time-1 map of the contact isotopy $\widetilde{\phi_t}$ of $S^{2n-1}$ that lifts $\phi_t$ and begins at the identity. Note that $\widetilde{\phi_t}$ is generated by the Hamiltonian function $\widetilde{h_t}=h_t\circ \pi$ where $\pi: S^{2n-1} \rightarrow \mathbb{R}P^{2n-1}$ is the projection and $h_t: \mathbb{R}P^{2n-1}\rightarrow\mathbb{R}$ is the Hamiltonian function of $\phi_t$. We now lift the contactomorphism $\widetilde{\phi}$ of $S^{2n-1}$ to a $\mathbb{R}_+$-equivariant Hamiltonian symplectomorphism $\Phi$ of $\mathbb{R}^{2n}$, as described in Section \ref{bg}. Recall from Section \ref{gf} that $\Phi$ has a generating function which is homogeneous of degree $2$. We will now show that, since $\widetilde{\phi}$ is the lift of a contactomorphism of $\mathbb{R}P^{2n-1}$, this generating function can be assumed to be also $\mathbb{Z}_2$-invariant.

\begin{prop}
Let $\phi$ be a contactomorphism of $\mathbb{R}P^{2n-1}$ contact isotopic to the identity, and consider its lift $\Phi$ to $\mathbb{R}^{2n}$. Then $\Phi$ has a generating function $F: \mathbb{R}^{2n}\times\mathbb{R}^{2N} \rightarrow \mathbb{R}$ which is homogeneous of degree $2$ and invariant by the diagonal action of $\mathbb{Z}_2$ on $\mathbb{R}^{2n}\times\mathbb{R}^{2N}$. In other words, $F$ is \emph{conical} i.e. $F(\lambda q; \lambda\mu) = \lambda^2 F(q;\mu)$ for all $\lambda\in\mathbb{R}$.
\end{prop}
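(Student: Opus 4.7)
The plan is to run through the construction of homogeneous generating functions from Propositions \ref{ex_gf_ne} and \ref{ex_gf}, keeping track of $\mathbb{Z}_2$-invariance at each step just as Th\'{e}ret tracks $\mathbb{R}_+$-equivariance. The starting observation is that the hypothesis on $\phi$ makes the lifted Hamiltonian itself $\mathbb{Z}_2$-invariant: if $\phi_t$ is a contact isotopy on $\mathbb{R}P^{2n-1}$ with contact Hamiltonian $h_t$, then the lifted isotopy $\widetilde{\phi_t}$ is generated by $\widetilde{h_t}=h_t\circ\pi$, which is antipodally invariant on $S^{2n-1}$. Consequently the Hamiltonian $H_t(z)=|z|^2\widetilde{h_t}(z/|z|)$ on $\mathbb{R}^{2n}$ is both homogeneous of degree $2$ and satisfies $H_t(-z)=H_t(z)$, so the flow $\Phi_t$ commutes with the antipodal map. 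In particular $\Gamma_{\Phi_t}\subset T^{\ast}\mathbb{R}^{2n}$ is invariant under the $\mathbb{Z}_2$-action on $T^{\ast}\mathbb{R}^{2n}$ induced by $-\mathrm{id}$.

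First I would handle the $\mathcal{C}^1$-small case. If $\Phi$ is a $\mathcal{C}^1$-small, $\mathbb{R}_+$-equivariant and $\mathbb{Z}_2$-equivariant Hamiltonian symplectomorphism of $\mathbb{R}^{2n}$, then $\Gamma_{\Phi}$ is the graph of $dF$ for a unique $F:\mathbb{R}^{2n}\to\mathbb{R}$ normalized by $F(0)=0$. Running the argument of Proposition \ref{ex_gf} with $\tau_\lambda$ replaced by $\tau_{-1}=-\mathrm{id}$, one finds that the function $(x,y)\mapsto F(-x,-y)$ also generates $\tau_{-1}(\Gamma_{\Phi})=\Gamma_{\Phi}$ (the last equality by $\mathbb{Z}_2$-equivariance of $\Phi$), and agrees with $F$ at the origin. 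Uniqueness of the normalized primitive gives $F(-x,-y)=F(x,y)$. Combined with $F(\lambda x,\lambda y)=\lambda^2 F(x,y)$ for $\lambda>0$, this yields conicity: $F(\lambda x,\lambda y)=\lambda^2 F(x,y)$ for all $\lambda\in\mathbb{R}$.

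Next I would verify that Th\'{e}ret's composition formula (Lemma \ref{cf}) preserves $\mathbb{Z}_2$-invariance once we equip the fiber coordinates with the diagonal antipodal action $(u;v,w,\mu,\eta)\mapsto(-u;-v,-w,-\mu,-\eta)$. Indeed,
\begin{align*}
(F\sharp G)(-u;-v,-w,-\mu,-\eta)
&= F(-u-w;-\mu)+G(-v-w;-\eta)+2\langle -u+v,-iw\rangle\\
&= F(u+w;\mu)+G(v+w;\eta)+2\langle u-v,iw\rangle\\
&= (F\sharp G)(u;v,w,\mu,\eta),
\end{align*}
using $\mathbb{Z}_2$-invariance of $F$ and $G$ and the bilinearity of $\langle\cdot,i\cdot\rangle$. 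Homogeneity of degree $2$ is preserved by $F\sharp G$ as already noted in the proof of Proposition \ref{ex_gf}.

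Finally I would assemble the two ingredients. Choose a $\mathbb{Z}_2$-equivariant contact isotopy $\widetilde{\phi_t}$ of $S^{2n-1}$ lifting $\phi_t$, subdivide $[0,1]=[t_0,t_1]\cup\dots\cup[t_{k-1},t_k]$ finely enough that each $\widetilde{\phi_{t_j}}\circ\widetilde{\phi_{t_{j-1}}}^{\,-1}$ lifts to a $\mathcal{C}^1$-small $\mathbb{R}_+$- and $\mathbb{Z}_2$-equivariant Hamiltonian symplectomorphism of $\mathbb{R}^{2n}$, apply the $\mathcal{C}^1$-small case to get a conical generating function at each step, and iterate the composition formula. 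By the two preservation statements above the final generating function $F:\mathbb{R}^{2n}\times\mathbb{R}^{2N}\to\mathbb{R}$ is homogeneous of degree $2$ and invariant under the diagonal antipodal action, which is precisely the conicity statement. The one place to be careful — and the main obstacle — is the verification that the bilinear term $2\langle u-v,iw\rangle$ in the composition formula is invariant under the simultaneous sign flip of all four fiber variables $(u,v,w,\mu,\eta)$; this is the calculation displayed above and works out because that term is bilinear rather than linear in the flipped coordinates.
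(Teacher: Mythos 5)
Your proposal is correct and follows essentially the same route as the paper's proof: reduce to the $\mathcal{C}^1$-small case by observing that the composition formula of Lemma \ref{cf} preserves $\mathbb{Z}_2$-invariance (under the diagonal sign flip on base and fiber variables), then settle the $\mathcal{C}^1$-small case by noting that $F\circ\sigma$ generates $\sigma(\Gamma_{\Phi})=\Gamma_{\Phi}$ and invoking uniqueness of the normalized generating function. The only difference is one of exposition: you write out explicitly the invariance check for the term $2\langle u-v,iw\rangle$ and the $\mathbb{Z}_2$-equivariance of $\Phi$ via the evenness of the lifted Hamiltonian, both of which the paper asserts without computation.
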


\begin{proof}
Consider the homogeneous generating function constructed in Proposition \ref{ex_gf}. Since the composition formula of Lemma \ref{cf} preserves the property of being $\mathbb{Z}_2$-invariant, it is enough to prove that if $\Phi$ is $\mathcal{C}^1$-small then its homogeneous generating function
$F: \mathbb{R}^{2n}\rightarrow \mathbb{R}$ is $\mathbb{Z}_2$-invariant. We denote by $\sigma: \mathbb{R}^{2n}\rightarrow\mathbb{R}$ the map $\sigma(x,y)=(-x,-y)$ and use the same notation also for the induced map on $T^{\ast}\mathbb{R}^{2n}$. Since $\Phi$ is $\mathbb{Z}_2$-equivariant we have that $\sigma(\Gamma_{\Phi})=\Gamma_{\Phi}$. Consider the function $\overline{F}: \mathbb{R}^{2n}\rightarrow \mathbb{R}$, $\overline{F}=F\circ\sigma$. Then $\overline{F}$ generates $\sigma(\Gamma_{\Phi})=\Gamma_{\Phi}$. Recall that the generating function $F: \mathbb{R}^{2n}\rightarrow \mathbb{R}$ of $\Phi$ is uniquely defined once we normalize it by $F(0)=0$ (this normalization is automatic in our case since $F$ is homogeneous of degree $2$). Since $\overline{F}(0)=F(0)=0$ we have thus that $\overline{F}=F$, hence $F$ is $\mathbb{Z}_2$-invariant.
\end{proof}

To find translated points of the contactomorphism $\widetilde{\phi}$ of $S^{2n-1}$ we will now apply the same strategy as in the previous section. However, as we will now see, the fact that the generating function is $\mathbb{Z}_2$-invariant will imply that in this case the translated points are at least $2n$ instead of only $2$. \\
\\
As before let $a_t: S^{2n-1} \rightarrow S^{2n-1}$ be the negative Reeb flow $z \mapsto e^{-2\pi i t}z$, and consider the contact isotopy $a_t\circ\widetilde{\phi}$, $t\in[0,1]$. Then
$F_t=A_t \sharp F: \mathbb{R}^{2n}\times\mathbb{R}^{2M}\rightarrow\mathbb{R}$ is a path of conical generating functions for $a_t\circ\phi$, $t\in[0,1]$. Since every $F_t$ is conical, we get induced functions $f_t: \mathbb{R}P^{2n+2M-1}\rightarrow\mathbb{R}$, $t\in[0,1]$.\\
\\
We are interested in the translated points of $\phi:\mathbb{R}P^{2n-1}\rightarrow\mathbb{R}P^{2n-1}$. Note that their number is half the number of translated points of $\widetilde{\phi}$, and so half the number of discriminant points of $a_t\circ\widetilde{\phi}$ for $t$ varying in the interval $[0,1]$. Since discriminant points of $a_t\circ\widetilde{\phi}$ correspond to critical points of the restriction of the generating function $F_t$ to the unit sphere, which are twice as many as the critical points of $f_t$, we have that translated points of $\phi$ correspond to the union for all $t\in[0,1]$ of the critical points of $f_t$.\\
\\
Recall that all critical points of $f_t$ have critical value $0$. To detect their presence we will proceed as in the previous section, i.e. we will look at the changes in the topology of the subset $N_t:=\{f_t\leq0\}$ of $\mathbb{R}P^{2n+2M-1}$ for $t\in[0,1]$. In order to detect the changes in the topology of the $N_t$ we will use the cohomological index for subsets of projective spaces introduced by Fadell and Rabinowitz \cite{FR} (see also \cite{Giv} and \cite{Th2}). \\
\\
The cohomological index of a subset $X$ of a real projective space $\mathbb{R}P^m$ is defined as follows. Recall that $H^{\ast}(\mathbb{R}P^m;\mathbb{Z}_2)=\mathbb{Z}_2[u]/u^{m+1}$ where $u$ is the generator of $H^1(\mathbb{R}P^m;\mathbb{Z}_2)$. We define
$$
\text{ind}(X)=1+\text{max}\{\,k\in\mathbb{N}\;|\;i_X^{\phantom{X}\ast}(u^k)\neq 0\,\}
$$
where $i_X: X\rightarrow\mathbb{R}P^m$ is the inclusion. We also set by definition $\text{ind}(\emptyset)=0$. \\
\\
Given a conical function $F:\mathbb{R}^{m+1}\rightarrow\mathbb{R}$ we will denote by $\text{ind}(F)$ the index of the sublevel set at $0$ of the induced function on $\mathbb{R}P^m$. Note that if $Q$ is a quadratic form on $\mathbb{R}^{m+1}$ then $\text{ind}(Q)=i(Q)+\text{dim}(\text{ker}(Q))$, where $i(Q)$ denotes the index of $Q$ as a quadratic form.

\begin{lemma}\label{giv}
Let $F$ and $G$ be functions defined on $\mathbb{R}^{m+1}$ and $\mathbb{R}^{m'+1}$ respectively, and consider the function $F\oplus G: \mathbb{R}^{m+m'+2}\rightarrow\mathbb{R}$. Then we have
$$
\text{ind} \big(F\oplus G\big) = \text{ind}(F) + \text{ind}\big(G).
$$
\end{lemma}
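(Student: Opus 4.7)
The plan is to identify the sublevel set $\{F \oplus G \leq 0\}$ on the unit sphere (up to $\mathbb{Z}_2$-equivariant homotopy) with the join of the sublevel sets of $F$ and $G$, and then apply the additivity of the Fadell--Rabinowitz cohomological index under joins with diagonal $\mathbb{Z}_2$-action. I would first identify $S^{m+m'+1} \subset \mathbb{R}^{m+1}\oplus\mathbb{R}^{m'+1}$ with the topological join $S^m * S^{m'}$ via $(\hat x, \hat y, \theta) \mapsto (\cos\theta\cdot\hat x,\,\sin\theta\cdot\hat y)$ for $\theta\in[0,\pi/2]$. Since $F$ and $G$ are conical, $F(X) = \cos^2\theta\, F(\hat x)$ and $G(Y)=\sin^2\theta\, G(\hat y)$, so the join $\tilde X_F * \tilde X_G$ (with $\tilde X_F = \{F\leq 0\}\cap S^m$ and analogously for $G$) corresponds precisely to the closed subset $\{F(X)\leq 0\}\cap\{G(Y)\leq 0\}$ of $S^{m+m'+1}$, which is contained in $\tilde Z := \{F(X)+G(Y)\leq 0\}$. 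The diagonal $\mathbb{Z}_2$-action $(X,Y)\mapsto(-X,-Y)$ corresponds to $(\hat x,\hat y,\theta)\mapsto(-\hat x,-\hat y,\theta)$, so everything is $\mathbb{Z}_2$-equivariant.

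The first key step is a $\mathbb{Z}_2$-equivariant strong deformation retract of $\tilde Z$ onto $\tilde X_F * \tilde X_G$. The geometric observation is that on $\tilde Z$, the positive parts of $F(\hat x)$ and $G(\hat y)$ have disjoint supports: if both were positive, then $\cos^2\theta\, F(\hat x) + \sin^2\theta\, G(\hat y) > 0$ for every $\theta$, contradicting membership in $\tilde Z$. At a point with $F(\hat x)>0$ (hence $G(\hat y)<0$) one flows $\theta$ monotonically toward $\pi/2$, collapsing the offending $X$-component; symmetrically toward $0$ when $G(\hat y)>0$. The linearity of $\cos^2\theta\, F(\hat x) + \sin^2\theta\, G(\hat y)$ in $(\cos^2\theta,\sin^2\theta)$ implies that the value stays nonpositive along the entire path. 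Continuity across the vanishing loci $\{F=0\}$ and $\{G=0\}$ is achieved by interpolating the motion with a cut-off of $\max(F,0) + \max(G,0)$; the disjoint-support property on $\tilde Z$ is what makes such a partition-of-unity style interpolation well defined and still land in $\tilde X_F * \tilde X_G$ at time one.

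The second step is to invoke the additivity of the cohomological index under joins: for $\mathbb{Z}_2$-invariant closed subsets $A\subset S^m$ and $B\subset S^{m'}$, the join $A*B\subset S^{m+m'+1}$ with diagonal $\mathbb{Z}_2$-action satisfies
$$
\text{ind}\bigl((A*B)/\mathbb{Z}_2\bigr) \;=\; \text{ind}(A/\mathbb{Z}_2) + \text{ind}(B/\mathbb{Z}_2).
$$
This is the standard computation resting on the identification $A*B\simeq\Sigma(A\wedge B)$, the $\mathbb{Z}_2$-Künneth formula, and the multiplicativity of the Stiefel--Whitney class $w\in H^1(B\mathbb{Z}_2;\mathbb{Z}_2)$ pulled back through the Borel constructions, exactly as used by Fadell--Rabinowitz \cite{FR} and Givental \cite{Giv}. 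Combining this with Step~1 and the homotopy invariance of the index yields
$$
\text{ind}(F\oplus G) \;=\; \text{ind}\bigl((\tilde X_F * \tilde X_G)/\mathbb{Z}_2\bigr) \;=\; \text{ind}(F) + \text{ind}(G),
$$
as required.

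The main technical obstacle lies in Step~1: the naive piecewise definition of the retraction (flow $\theta$ to $\pi/2$ where $F>0$, to $0$ where $G>0$, stay put where both $F,G\leq 0$) is discontinuous at the loci $\{F=0\}$ and $\{G=0\}$, and gluing the two directions together requires a careful cut-off argument that simultaneously preserves the constraint $F(X)+G(Y)\leq 0$ throughout the homotopy. Once the deformation retract is in place, the rest of the proof is a clean invocation of the join formula for the index.
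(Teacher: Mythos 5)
Your overall strategy coincides with the paper's: identify $\{F\oplus G\leq 0\}$ up to homotopy with the (projective) join of $\{F\leq 0\}$ and $\{G\leq 0\}$, then invoke the K\"unneth-based additivity of the Fadell--Rabinowitz index under joins; the paper only sketches this and defers both steps to Givental's appendices, and your Step 2 is exactly that citation, transposed to the $\mathbb{Z}_2$-equivariant sphere picture (which is equivalent, since the projective join is the quotient of the spherical join by the diagonal action). The geometric core of your Step 1 is also correct: by conicality the join $\tilde X_F \ast \tilde X_G$ is precisely $\{F(X)\leq 0\}\cap\{G(Y)\leq 0\}$, on $\tilde Z$ the sets $\{F(X)>0\}$ and $\{G(Y)>0\}$ are disjoint, and moving $\theta$ monotonically in the indicated direction keeps $F(X)+G(Y)$ nonincreasing.

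The genuine gap is your claim that a cut-off turns the meridian motion into a strong deformation retract whose time-one map is continuous \emph{and} lands in $J:=\tilde X_F \ast \tilde X_G$. These two requirements are incompatible for motions of this type: if $F(\hat x)>0$ then $\cos^2\theta\, F(\hat x)>0$ for every $\theta<\pi/2$, so to land in $J$ such a point must be sent all the way to the pole $(0,\hat y)$, while arbitrarily nearby points with $F(\hat x)=0$ lie in $J$ and must stay fixed -- so the time-one map is forced to be discontinuous along $\{F(\hat x)=0\}$; conversely, if you damp the speed by $\max(F,0)+\max(G,0)$ to restore continuity, points with small positive $F(\hat x)$ simply do not reach $J$ at time one. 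The standard repair (and what makes the argument work for the arbitrary closed sublevel sets that occur here) is to give up on a deformation retraction altogether. The \emph{undamped} flow $\dot\theta=\max(F(X),0)-\max(G(Y),0)$ is continuous on $\tilde Z$ for each finite time (explicitly, $\tan\theta(t)=\tan\theta_0+F(\hat x)\,t$ on $\{F(\hat x)>0\}$, matching the identity continuously across $\{F(\hat x)=0\}$), preserves $\tilde Z$, fixes $J$, is $\mathbb{Z}_2$-equivariant, and pushes $\tilde Z$ into an arbitrarily small neighborhood of $J$ \emph{uniformly} in finite time: one has $\max(F(X),0)\leq \frac{1}{2t}$ along the flow, and on a compact sphere smallness of $\max(F(X),0)+\max(G(Y),0)$ forces proximity to $J$. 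One then concludes $\text{ind}(\tilde Z)=\text{ind}(J)$ not from a homotopy equivalence but from the \emph{continuity} and monotonicity properties of the index: choose a closed neighborhood $\mathcal{U}$ of $J$ with $\text{ind}(\mathcal{U})=\text{ind}(J)$; since the time-$T$ map is homotopic to the inclusion of $\tilde Z$ into the ambient projective space and factors through $\mathcal{U}$ for $T$ large, $\text{ind}(J)\leq\text{ind}(\tilde Z)\leq\text{ind}(\mathcal{U})=\text{ind}(J)$. These are exactly the properties of the index that the paper records (for the lemma following this one), so the repair uses no tools beyond those already present; but as written, your Step 1 does not go through.
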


We refer for a proof to \cite{Giv}[Appendices A and B]. The idea is the following. Let $f:\mathbb{R}P^m\rightarrow\mathbb{R}$, $g:\mathbb{R}P^{m'}\rightarrow\mathbb{R}$ and $f\oplus g:\mathbb{R}P^{m+m'+1}\rightarrow\mathbb{R}$ be the functions induced by $F$, $G$ and $F\oplus G$ respectively. Then $\{f\oplus g\leq 0\}\subset\mathbb{R}P^{m+m'+1}$ is homotopy equivalent to the projective join of $\{f\leq 0\}\subset\mathbb{R}P^m$ and  $\{g\leq 0\}\subset\mathbb{R}P^{m'}$. Recall that the projective join $A\ast B$ of a subset $A$ of $\mathbb{R}P^a$ and a subset $B$ of $\mathbb{R}P^b$ is the union of all the projective lines in $\mathbb{R}P^{a+b+1}$ passing through a point of $A$ and a point of $B$, where we regard $\mathbb{R}P^a$ and $\mathbb{R}P^b$ as subspaces of $\mathbb{R}P^{a+b+1}$. Using an equivariant K\"{u}nnet formula one can express the $\mathbb{Z}_2$-cohomology of a join $A\ast B$ in terms of the $\mathbb{Z}_2$-cohomology of $A$ and $B$. This fact can then be used to prove that $\text{ind}(A\ast B)=\text{ind}(A)+\text{ind}(B)$.\\
\\
We can now complete the proof of the existence of $2n$ translated points for a contactomorphism $\phi$ of $\mathbb{R}P^{2n-1}$ which is contact isotopic to the identity. Recall that we consider a 1-parameter family $F_t=A_t\sharp F: \mathbb{R}^{2n} \times \mathbb{R}^{2M}\rightarrow \mathbb{R}$ of generating functions for the isotopy $a_t \circ \phi$, $t\in [0,1]$, and the induced 1-parameter family of functions $f_t: \mathbb{R}P^{2n+2M-1}\rightarrow\mathbb{R}$. For $t\in[0,1]$ we set $l(t)=\text{ind}(N_t)$ where $N_t=\{f_t\leq 0\}$. By Lemma \ref{monotonicity} we have that the map $t\mapsto l(t)$ is increasing. Moreover it is locally constant in the neighborhood of a value of $t$ for which $f_t^{\phantom{t}-1}(0)$ is non-singular.\\
\\
The following lemma is proved in \cite{Th1}. We rewrite the proof here for the sake of the reader.

\begin{lemma}
If $l(t_0^{\phantom{0}+})=l(t_0^{\phantom{0}-})+i$ then the set of critical points of $f_{t_0}$ has index greater or equal to $i$, and so in particular it is infinite if $i\geq 2$.
\end{lemma}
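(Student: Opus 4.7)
The plan is to deduce the estimate from three standard properties of the Fadell--Rabinowitz cohomological index $\text{ind}$: \emph{monotonicity} (if there is a map $X\to Y$ homotopic in $\mathbb{R}P^m$ to the inclusion of $X$, then $\text{ind}(X)\leq\text{ind}(Y)$), \emph{subadditivity} (\mbox{$\text{ind}(X\cup Y)\leq \text{ind}(X)+\text{ind}(Y)$}, by a Mayer--Vietoris argument), and \emph{upper semicontinuity} (every compact subset $K\subset \mathbb{R}P^m$ has a neighborhood $U$ with $\text{ind}(U)=\text{ind}(K)$). The basic strategy is that the jump $l(t_0^+)-l(t_0^-)=i$ has to be absorbed by the index of a small neighborhood of the critical set $K=\text{Crit}(f_{t_0})$, which forces $\text{ind}(K)\geq i$.

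The main obstacle is that the $N_t$ are sublevel sets of a \emph{varying} family $f_t$, so a classical Morse-theoretic deformation argument cannot be applied directly. I would resolve this by exploiting the monotonicity Lemma \ref{monotonicity}: since $\partial_t f_t<0$ on the compact space $\mathbb{R}P^{2n+2M-1}$, there are constants $0<c<C$ with $\partial_t f_t\in[-C,-c]$, and integration gives the inclusions
\[
N_{t_0+\epsilon}\subset\{f_{t_0}\leq C\epsilon\}, \qquad \{f_{t_0}\leq -C\epsilon\}\subset N_{t_0-\epsilon}
\]
for every $\epsilon>0$. This reduces the problem to comparing sublevel sets of the single function $f_{t_0}$ near its critical level $0$. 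I then fix a neighborhood $U$ of $K$ with $\text{ind}(U)=\text{ind}(K)$. Because $F_{t_0}$ is homogeneous of degree $2$, all critical points of $f_{t_0}$ sit at critical value $0$, so $f_{t_0}$ has no critical points on the compact set $\mathbb{R}P^{2n+2M-1}\setminus U$; a standard negative pseudo-gradient deformation then produces, for $\epsilon>0$ small enough, a homotopy in $\mathbb{R}P^{2n+2M-1}$ from the inclusion of $\{f_{t_0}\leq C\epsilon\}$ to a map into $\{f_{t_0}\leq -C\epsilon\}\cup U$. Combining monotonicity, subadditivity, and the two inclusions above yields
\[
l(t_0^+)=\text{ind}(N_{t_0+\epsilon})\leq \text{ind}(N_{t_0-\epsilon})+\text{ind}(U)=l(t_0^-)+\text{ind}(K),
\]
that is $\text{ind}(K)\geq i$.

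The final assertion is then immediate: if $K$ were a finite set of points then $H^k(K;\mathbb{Z}_2)=0$ for every $k\geq 1$, so $i_K^{\ast}(u^k)=0$ for $k\geq 1$ and hence $\text{ind}(K)\leq 1$. Therefore $i\geq 2$ forces $K$ to be infinite.
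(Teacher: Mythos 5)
Your overall route is the same as the paper's: the jump in $l$ is absorbed, via the continuity and sub-additivity properties of the Fadell--Rabinowitz index, by a closed neighborhood $\mathcal{U}$ of the critical set, after deforming $N_{t_0^{\phantom{0}+}}$ into $N_{t_0^{\phantom{0}-}}\cup\mathcal{U}$. Your quantitative treatment of the varying family --- the bounds $-C\leq\partial_t f_t\leq -c<0$ coming from Lemma \ref{monotonicity} and compactness, and the resulting inclusions $N_{t_0+\epsilon}\subset\{f_{t_0}\leq C\epsilon\}$ and $\{f_{t_0}\leq -C\epsilon\}\subset N_{t_0-\epsilon}$ --- is a useful expansion of the deformation step, which the paper asserts in one line without proof.

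There is, however, one genuine error: the claim that ``because $F_{t_0}$ is homogeneous of degree $2$, all critical points of $f_{t_0}$ sit at critical value $0$.'' Homogeneity gives, by Euler's identity $2F_{t_0}(x)=\langle\nabla F_{t_0}(x),x\rangle$, that all critical points of $F_{t_0}$ on $\mathbb{R}^{2n+2M}$ have value $0$; but a critical point of the restriction $f_{t_0}$ to the unit sphere is a point where $\nabla F_{t_0}(x)=\lambda x$ for some Lagrange multiplier $\lambda$, and its critical value is $\lambda/2$, which need not vanish. For instance, if $\phi=\text{id}$ with generating function $F(x;\mu)=Q(\mu)$, $Q$ a non-degenerate quadratic form on the fiber, then $f$ has critical points $(0,\mu)$ for $\mu$ an eigendirection of $Q$, with nonzero critical values. (The paper makes the same slip just before the lemma, but its proof is careful to take $K_{t_0}$ to be the critical points \emph{with critical value $0$} --- these are the ones corresponding to discriminant points.) Consequently it is false in general that $f_{t_0}$ has no critical points on $\mathbb{R}P^{2n+2M-1}\setminus U$, so your pseudo-gradient deformation is not justified as stated; and if you instead take $K$ to be the full critical set, the conclusion $\text{ind}(K)\geq i$ becomes too weak for the intended application, since critical points at nonzero levels (e.g. the maximum of $f_{t_0}$) contribute to $K$ but not to the count of translated points. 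The repair is standard: take $K$ to be the level-$0$ critical set, choose $U\supset K$ by continuity, and observe that by compactness of the critical set of $f_{t_0}$ there is $\delta>0$ such that every critical point with critical value in $[-\delta,\delta]$ lies in $U$; the first deformation lemma of Lusternik--Schnirelmann theory (valid for the $\mathcal{C}^1$ function $f_{t_0}$ on a compact manifold) then deforms $\{f_{t_0}\leq C\epsilon\}$ into $\{f_{t_0}\leq -C\epsilon\}\cup U$ for $\epsilon$ small enough. With this correction your chain of inequalities, and the closing observation that a finite nonempty set has index $1$, go through.
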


\begin{proof}
We will use the following two properties of the cohomological index.
\begin{itemize}
 \item[-] (\textit{Continuity}) Let $X$ be a subset of $\mathbb{R}P^m$. Then there is a closed neighborhood $\mathcal{U}$ of $X$ such that $\text{ind}(X)=\text{ind}(\mathcal{U})$.
 \item[-] (\textit{Sub-additivity}) Let $X$, $X'$ be two subsets of $\mathbb{R}P^m$. Then $\text{ind}(X\cup X')\leq\text{ind}(X)+\text{ind}(X')$.
\end{itemize}
We refer to \cite{FR} for a proof of these properties. Now let $K_{t_0}$ be the set of critical points of $f_{t_0}$ with critical value $0$. By continuity we have a closed neighborhood $\mathcal{U}$ of $K_{t_0}$ such that $\text{ind}(\mathcal{U})=\text{ind}(K_{t_0})$. On the other hand we can deform $N_{t_0^{\phantom{0}+}}$ to $N_{t_0^{\phantom{0}-}}\cup \mathcal{U}$, so that $\text{ind}(N_{t_0^{\phantom{0}+}})=\text{ind}(N_{t_0^{\phantom{0}-}}\cup \mathcal{U})$. By sub-additivity we then have $\text{ind}(N_{t_0^{\phantom{0}+}})\leq \text{ind}(N_{t_0^{\phantom{0}-}}) + \text{ind}(\mathcal{U})$. Since we assume that $\text{ind}(N_{t_0^{\phantom{0}+}})=\text{ind}(N_{t_0^{\phantom{0}-}})+i$ we conclude that $\text{ind}(K_{t_0})\geq i$, as we wanted.
\end{proof}

By this discussion, our result follows if we prove that $l(1)-l(0)=2n$.\\
\\
Recall that $l(1)=\text{ind}(A_1\sharp F)$ and $l(0)=\text{ind}(A_0\sharp F)$. As in the previous section we have smooth paths $\Psi^0_s$ and $\Psi^1_s$, $s\in[0,1]$ of fiber preserving diffeomorphisms of $\mathbb{R}^{2n} \times \mathbb{R}^{2M}$ which are equivariant with respect to the diagonal actions of $\mathbb{R}_+$ and $\mathbb{Z}_2$ and are such that $\Psi^0_0$ and $\Psi^1_0$ are the identity and $A_0\circ \Psi^0_1=\overline{A_0}$ and $A_1\circ \Psi^1_1=\overline{A_1}$ for some non-degenerate quadratic forms $\overline{A_0}$ and $\overline{A_1}$ on $\mathbb{R}^{2M}$. We will now use this fact to apply Lemma \ref{giv} to our situation.\\
\\
We have

\begin{eqnarray*}
l(1)-l(0)&=&\text{ind}\big(A_1\sharp F\big)-\text{ind}\big(A_0\sharp F\big)\\
&=&\text{ind}\big(\overline{A_1}\oplus F\big)-\text{ind}\big(\overline{A_0}\oplus F\big)\\
&=&\text{ind}\big(\overline{A_1}\big) + \text{ind}(F) - \text{ind}\big(\overline{A_0}\big) - \text{ind}(F) \\
&=&\text{ind}(\overline{A_1}) - \text{ind}(\overline{A_0}).
\end{eqnarray*}

But as we already mentioned the cohomological index of a non-degenerate quadratic form  is equal to its index as a quadratic form, thus
$$
l(1)-l(0) = i(\overline{A_1}) - i(\overline{A_0}) = 2n.
$$
where the last equality follows from Proposition \ref{mas}.


\begin{thebibliography}{99}



\bibitem[AH]{AH} C.~Abbas and H.~Hofer, \textit{Holomorphic curves and global questions in contact geometry}, to appear in Birkh\"{a}user.


\bibitem[AF10]{AF2} P.~Albers and U.~Frauenfelder, Leaf-wise intersections and Rabinowitz Floer homology, \textit{J. Topol. Anal.} \textbf{2} (2010), 77--98.


\bibitem[Arn]{A1} V.~Arnol'd, \textit{Mathematical methods of classical mechanics}, Graduate Texts in Mathematics, 2nd edition, Springer-Verlag, New York, 1989.





\bibitem[Chek96]{C} Y.~Chekanov, Critical points of quasi-functions and generating families of Legendrian manifolds, \textit{Funct. Anal. Appl.} \textbf{30} (1996), 118--128.




\bibitem[EH89]{EH} I.~Ekeland and H.~Hofer, Two symplectic fixed-point theorems with applications to Hamiltonian dynamics, \textit{J. Math. Pures Appl.} (9) \textbf{68} (1989), no 4, 467--489 (1990).

\bibitem[EP00]{EP}
Y.~Eliashberg and L.~Polterovich, Partially ordered groups and geometry of contact transformations, \textit{Geom. Funct. Anal.} \textbf{10} (2000), 1448--1476.

\bibitem[FR78]{FR} E.~Fadell and P.~H.~Rabinowitz, Generalized cohomological index theories for Lie group actions with an application to bifurcation questions for Hamiltonian systems, \textit{Invent. Math.} \textbf{45} (1978), 139--174.

\bibitem[For85]{F} B.~Fortune, A symplectic fixed point theorem for $\mathbb{C}P^n$. \textit{Invent. Math.} \textbf{81} (1985), 29--46.

\bibitem[FW85]{FW} B.~Fortune and A.~Weinstein, A symplectic fixed point theorem for complex projective spaces, \textit{Bull. Amer. Math. Soc.} \textbf{12} (1985), 128--130.


\bibitem[Giv90]{Giv} A.~Givental, Nonlinear generalization of the Maslov index, in \textit{Theory of singularities and its applications}, pp. 71--103, Adv. Soviet Math., 1, Amer. Math. Soc., Providence, RI, 1990.


\bibitem[Hat]{Hat} A .~Hatcher, \textit{Algebraic Topology}, Cambridge University Press (2002).


\bibitem[H\"{o}r71]{Hor}
L.~H\"{o}rmander, Fourier integral operators I, \textit{Acta Math.} \textbf{127} (1971), 17--183.



\bibitem[LS85]{LS} F.~Laudenbach and J.C.~Sikorav, Persistance d'intersection avec la section nulle au cours d'une isotopie hamiltonienne dans un fibre cotangent, \textit{Invent. Math.} \textbf{82} (1985), 349--357.

\bibitem[MS]{MS} D.~McDuff, D.~Salamon, \textit{Introduction to Symplectic Topology}, Oxford University Press, 1998.





\bibitem[S11]{mio4} S.~Sandon, On periodic translated points for contactomorphisms of $R^{2n+1}$ and $R^{2n} x S^1$, \textit{arXiv:1102.4202}.

\bibitem[Th95]{Th1}
D.~Th\'{e}ret, Utilisation des fonctions g\'{e}n\'{e}ratrices en g\'{e}om\'{e}trie symplectique globale, Ph.D. Thesis, Universit\'{e} Denis Diderot (Paris 7), 1995.

\bibitem[Th98]{Th2} D.~Th\'{e}ret, Rotation numbers of Hamiltonian isotopies in complex projective spaces, \textit{Duke Math. J.} \textbf{94} (1998), 13--27.


\bibitem[Tr94]{Tr}
L.~Traynor, Symplectic Homology via generating functions, \textit{Geom. Funct. Anal.} \textbf{4} (1994), 718--748.

\bibitem[Vit92]{V} C.~Viterbo, Symplectic topology as the geometry of generating functions, \textit{Math. Ann.} \textbf{292} (1992), 685--710.


\end{thebibliography}
\end{document}